\newtheorem{theorem}{Theorem}
\newtheorem{corollary}[theorem]{Corollary}
\newtheorem{lemma}[theorem]{Lemma}
\newtheorem{proposition}[theorem]{Proposition}
\newtheorem{definition}[theorem]{Definition}
\newtheorem{remark}[theorem]{Remark}
\newtheorem{assumption}[theorem]{Assumption}
\newcommand{\mat}[4]{\left[\begin{matrix}#1&#2\\#3&#4\end{matrix}\right]}
\newcommand{\R}{\mathbb R}
\newcommand{\C}{\mathbb C}
\newcommand{\la}{\lambda}
\newcommand{\inte}{\operatorname{int}}
\newcommand{\im}{\operatorname{im}}
\newcommand{\linspan}{\operatorname{span}}
\renewcommand{\Re}{\operatorname{Re}}
\renewcommand{\Im}{\operatorname{Im}}
\newcommand{\rank}{\operatorname{rank}}
\newcommand{\<}{\langle}
\renewcommand{\>}{\rangle}
\newcommand{\ol}{\overline}
\newcommand{\bbU}{\mathbb U}
\newcommand{\calI}{\mathcal I}
\newcommand{\calA}{\mathcal A}
\begin{document}
\title{Control of port-Hamiltonian systems with minimal energy supply}
\author[M.\ Schaller, F.\ Philipp, T.\ Faulwasser, K.\ Worthmann and B.\ Maschke]{ Manuel Schaller$^{1}$, Friedrich Philipp$^{1}$, Timm Faulwasser$^{2}$, Karl Worthmann$^{1}$ and Bernhard Maschke$^{3}$}
	\thanks{}
		\thanks{$^{1}$Technische Universit\"at Ilmemau, Institute for Mathematics, Germany
		{\tt\small \{friedrich.philipp, manuel.schaller, karl.worthmann\}@tu-ilmenau.de}.} %
	\thanks{$^{2}$TU Dortmund University, Institute of Energy Systems, Energy Efficiency and Energy Economics, Germany
		{\tt\small timm.faulwasser@ieee.org}}%
	\thanks{$^{3}$Univ Lyon, Universit{\'e} Claude Bernard Lyon 1, CNRS, LAGEPP UMR 
		5007, France {\tt\small bernhard.maschke@univ-lyon1.fr}F.\ Philipp was funded by the Carl Zeiss Foundation within the project \textit{DeepTurb---Deep Learning in und von Turbulenz}. 
		M.\ Schaller was funded by the DFG (project numbers 289034702 and 430154635). %
		K.\ Worthmann gratefully acknowledges funding by the German Research Foundation (DFG; grant WO\ 2056/6-1, project number 406141926).
		B.Maschke thanks the Institute of Mathematics of TU Ilmenau for the invitation in October 2020 which has led to this paper.}

\begin{abstract}
We investigate optimal control of linear port-Hamiltonian systems with control constraints, in which one aims to perform a state transition with minimal energy supply. Decomposing the state space into dissipative and non-dissipative (i.e. conservative) subspaces, we show that the set of reachable states is bounded w.r.t.\ the dissipative subspace. We prove that the optimal control problem exhibits the turnpike property with respect to the non-dissipative subspace, i.e., for varying initial conditions and time horizons optimal state trajectories evolve close  to the conservative subspace most of the time. We analyze the corresponding steady-state optimization problem and prove that all optimal steady states lie in the non-dissipative subspace. We conclude this paper by illustrating these results by a numerical example from mechanics. 

\smallskip
\noindent \textbf{Keywords.} Dissipativity, minimal energy supply, optimal control, port-Hamiltonian systems, turnpike property
\end{abstract}

\maketitle

\section{INTRODUCTION}
\noindent The increasing impact of the port-Hamiltonian~(pH) framework for modelling, simulation, and analysis of interconnected physical systems 
is evidenced by the recent monographs~\cite{BrogliatoSpringer20,Jacob2012,van2014port}. 
Indeed the  pH framework extends Hamiltonian structures, which arise naturally in dynamic models of physical systems due to energy conservation and dissipation, to input and output ports. The latter point is of natural interest for control, where inputs and outputs are fundamental for feedback design. 

Actually, Hamiltonian structures with input and outputs arise in two distinct contexts in systems and control: 
(a) via energy-based modelling, where the (energy) Hamiltonian represents the total energy, 
which is \textit{the} avenue towards pH~systems, and (b) in optimal control, where the application of a variational principle leads to a Hamiltonian structure composed of the state and the adjoint/co-state/dual dynamics. In terms of (b), the (optimality) Hamiltonian is fundamental in stating Pontryagin's Maximum Principle (PMP). Moreover, in case of time-invariant Optimal Control Problems (OCPs) the  optimality Hamiltonian  is known to be invariant along optimal trajectory lifts. The classical link between both domains is given by variational modelling approaches in mechanics---i.e., the Euler-Lagrange formalism and the Hamilton formalism---which in turn can be considered as precursors of variational calculus and optimal control~\cite{Sussmann97}.


Since pH~systems are passive w.r.t.\ the usual $u^\top y$ passivity (impendance) supply rate~\cite[Chapter 7]{van2014port}, classical results on inverse optimality of passive feedbacks may be applied, see~\cite{Moylan14a,Sepulchre97a} and~\cite{Ortega08} for passivity-based feedback applied to pH~systems. Recently, the preprint~\cite{Koelsch2020} has suggested to combine inverse optimality with learning concepts. Besides these works, a few results exist on LQG~control using the structure of pH~systems \cite{Lamoline_MTNS18_LQGStochPHS,Automatica18_Wu}, on robust control 
\cite{Sato17}, and on robustness~\cite{Mehrmann20}.
In conclusion, given the common historical origins of port-Hamiltonian systems and the optimality Hamiltonian, surprisingly little has been done on exploiting pH~structures in optimal control.

Our goal in the present paper is to conduct first steps to fill this gap for linear pH~systems. In Section~\ref{sec:subspace}, after a concise analysis of the spectral properties 
and a decomposition into conservative and dissipative subspaces, we investigate the reachable set. 
In Section~\ref{sec:ocp} we consider the OCP to conduct a 
transition between given states with minimal 
supply of energy subject to input constraints. While this OCP is natural in terms of the 
objective functional, it is also singular as the energy supplied to the pH~system is given by the passivity supply rate~$u^\top y$. Subsequently, we analyze this OCP by using the Hamiltonian structure of the optimality system arising from 
the PMP in combination with the underlying pH~structure of the dynamics. 

Then, in 
Section~\ref{sec:DI}, we present our main results on the presence of turnpike phenomena in the considered class of OCPs. Turnpike properties of OCPs are a phenomenon first observed in economics; and the notion was coined in~\cite{Dorfman58}. They refer to the situation wherein, for varying initial conditions and different time horizons, the optimal solutions stay close to an optimal steady state 
during the middle part of the optimization horizon and the time spend far from the optimal steady state is bounded independent of the horizon length. We refer to \cite{Carlson91,Mckenzie76} for classical treatments, to \cite{Stieler14a,epfl:faulwasser15h,Gruene2016a,Gruene2019,Trelat15a} for recent results, and to \cite{tudo:faulwasser21b} for a recent overview.  

To the end of analysing turnpike properties of OCPs, we introduce the notion of dissipativity 
w.r.t.\ subspaces. This is related to recent results on dissipativity w.r.t.\ 
compact sets~\cite{Villanueva2020}. This way we extend recent results on dissipativity notions for OCPs~\cite{Gruene2016a,epfl:faulwasser15h}. Specifically, we show that the considered OCP is strictly dissipative w.r.t.\ 
the energy-conserving subspace under mild assumptions. This allows to establish that, for increasing horizons, the optimal solutions spend most of the time close to this conservative subspace. We also generalize the classical concept of  turnpikes being steady states---which can be understood as the attractor of infinite-horizon optimal solutions---to the turnpike being a subspace. Moreover, we show that in case of conservative pH~systems, despite the singular nature of the OCP, one can obtain optimal solutions by solving an auxiliary time-optimal problem. In other words, the technicalities of analyzing and deriving singular arcs can be avoided without loss of optimality. Finally, in Section \ref{sec:example}, we draw upon a simulation example motivated by mechanics to illustrate our findings. The paper closes with conclusions.

\section{DISSIPATIVE AND CONSERVATIVE SUBSPACES} \label{sec:subspace}
\noindent We consider (controlled) linear port-Hamiltonian systems 
\begin{subequations}\label{eq:pHsys_compl}
\begin{align}
\dot{x}(t)&=(J-R)Qx(t) + Bu(t), \qquad x(0)=x^0,\label{eq:PHsys}\\
y(t) &=  B^\top Qx(t),
\end{align}
\end{subequations}
where $J \in \mathbb{R}^{n\times n}$ is skew-symmetric, $R\in \mathbb{R}^{n\times n}$ is symmetric positive semidefinite, $Q \in \mathbb{R}^{n\times n}$ is symmetric positive definite, and $B\in \mathbb{R}^{n\times m}$ has full rank $m\le n$. In the following, (if not stated otherwise) we consider the input constraint $\mathbb U = [\underline{u}_1,\overline{u}_1]\times\dots [\underline{u}_m,\overline{u}_m]$ with $0$ in $\inte(\bbU)$, i.e., the interior of $\bbU$. We consider controls $u\in L^1(0,T;\mathbb{U})$, where $L^1(0,T;\mathbb{U})$ is the set of Lebesgue-measurable and absolutely integrable functions with values in~$\mathbb{U}$. The system \eqref{eq:PHsys} is to be understood in an almost-everywhere sense in time with solution $x\in W^{1,1}(0,T;\mathbb{R}^n)$, where $W^{1,1}(0,T;\mathbb{R}^n)$ is the space of functions $x\colon [0,T]\mapsto \mathbb{R}^n$ such that $x$ and its weak time derivative $\dot{x}$ belong to  $L^1(0,T;\mathbb{R}^n)$.

It can be easily checked that the energy Hamiltonian $H(x) \doteq \tfrac 12x^\top Qx$, which for physical systems corresponds to the total energy, 
satisfies the 
balance equation
\begin{align}
\label{eq:deriv_dissipativity}
\frac{\mathrm{d}}{\mathrm{d}t}H(x(t)) = u(t)^\top y(t) - \|R^{\frac12} Qx(t)\|_2^2.
\end{align}
Hence pH~systems of the form \eqref{eq:pHsys_compl} are passive with respect to the impendance supply rate $w = u^\top y$, see \cite[Section 6.3]{BrogliatoSpringer20}, \cite[Chapter 7]{van2014port}, and \cite{beattie2018robust} for the relation of dissipative linear time-invariant systems and port-Hamiltonian systems. Moreover note that $ u^\top y$ can be understood as the energy per time unit supplied to the system via the conjuguated (input and output) port variables.

In what follows we analyze the spectral properties of the system matrix $(J-R)Q$. Recall that, if $\la\in\C$ is a complex eigenvalue of a real matrix $A\in\R^{n\times n}$, then so is $\ol\la$ with eigenspace $\ker(A-\ol\la I_n) = \{\ol x : x\in\ker(A-\la I_n)\}$, where $\ol x = (\ol x_1,\ldots,\ol x_n)^\top$. If $\Im\la\neq 0$, from the linear independence of $x$ and $\ol x$ it follows that also $\Re(x)$ and $\Im(x)$ are linearly independent in $\R^n$. We set
$$
N_\la(A) \doteq \linspan\{\Re(x),\,\Im(x) : x\in\ker(A-\la I_n)\}\,\subset\,\R^n.
$$
This space has even dimension if $\Im\la\neq 0$. We say that a matrix $A$ is $Q$-symmetric ($Q$-skew-symmetric, $Q$-positive (semi-)definite) if it has the respective property with respect to the inner product $\<Q\,\cdot\,,\cdot\>$.
\begin{remark}[Spherical energy coordinates]\label{r:Q=I}
	Setting $\tilde x = Q^{1/2}x$, $\tilde J = Q^{1/2}JQ^{1/2}$, $\tilde R = Q^{1/2}RQ^{1/2}$, and $\tilde B = Q^{1/2}B$, the control system \eqref{eq:pHsys_compl}
	transfers into
	$$
	\dot{\tilde x} = (\tilde J-\tilde R)\tilde x + \tilde Bu,\quad y = \tilde B^\top \tilde x.
	$$
	In these coordinates, the energy becomes $H(\tilde x) = \frac12\|\tilde x\|^2$ and the matrix $Q = I_n$ and we shall use these coordinates occasionally to simplify proofs.
\end{remark}

\subsection{Spectrum and subspace decomposition for pH systems}

\noindent The following lemma provides the main result of this part. In a nutshell, there is a natural decomposition of the state space $\mathbb{R}^n$ into two subspaces such that the matrix $(J-R)Q$ is represented by a skew adjoint matrix on one subspace and by a Hurwitz matrix on the other. Hence in the sequel, we will refer to these subspaces as the conservative and the dissipative subspace.
\begin{lemma}[Spectrum and subspace decomposition]\label{l:dec}
The matrix $A \doteq (J-R)Q$ has the following spectral properties:
\begin{enumerate}
	\item[(i)]   Each eigenvalue of $A$ has non-positive real part.
	\item[(ii)]  For all $\alpha\in\R$, we have $\ker(A-i\alpha I_n) = \ker((A-i\alpha I_n)^2)$, i.e., the corresponding Jordan block is diagonal if $i\alpha\in\sigma(A)$, and it holds that
	\begin{equation}\label{e:Nia}
	N_{i\alpha}(A)\subset N_{i\alpha}(JQ)\,\cap\,\ker(RQ).
	\end{equation}
	\item[(iii)] There is a $Q$-orthogonal subspace decomposition $\R^n = M_1\oplus_Q M_2$ with respect to which
	\begin{equation}\label{e:mat_dec}
	JQ = \mat{J_1}00{J_2}\quad\text{and}\quad RQ = \mat 000{R_2},
	\end{equation}
	such that $J_1$ and $J_2$ are $Q$-skew-symmetric (on $M_1$ and $M_2$, respectively), $R_2$ is $Q$-positive semidefinite, and $J_2-R_2$ is Hurwitz, i.e., all eigenvalues have negative real part.
	\end{enumerate}
\end{lemma}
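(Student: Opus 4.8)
The plan is to reduce everything to the case $Q=I_n$ by passing to the spherical energy coordinates of Remark~\ref{r:Q=I}, in which $A$ turns into $\tilde A=\tilde J-\tilde R$ with $\tilde J=Q^{1/2}JQ^{1/2}$ genuinely skew-symmetric and $\tilde R=Q^{1/2}RQ^{1/2}$ genuinely symmetric positive semidefinite for the standard inner product $\langle\cdot,\cdot\rangle$. Since $\tilde A=Q^{1/2}AQ^{-1/2}$ is similar to $A$, all spectral statements transfer, and the $Q$-inner-product properties in the original coordinates correspond to ordinary ones after conjugation by $Q^{1/2}$. The basic tool is the identity $\Re\langle\tilde A v,v\rangle=-\langle\tilde R v,v\rangle\le 0$ for every $v\in\C^n$, valid because $\langle\tilde J v,v\rangle$ is purely imaginary while $\langle\tilde R v,v\rangle\ge 0$. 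Evaluated on an eigenvector $\tilde A v=\la v$ this reads $\Re(\la)\|v\|^2=-\langle\tilde R v,v\rangle\le 0$, which proves (i) at once.

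For (ii) I first note that an eigenvector $v$ for a purely imaginary eigenvalue $i\alpha$ forces $\langle\tilde R v,v\rangle=0$, hence $\tilde R^{1/2}v=0$ and thus $\tilde R v=0$; consequently $\tilde J v=\tilde A v=i\alpha v$. Transporting $v$ back through $Q^{-1/2}$ yields $RQx=0$ and $JQx=i\alpha x$ for the corresponding eigenvector $x$ of $A$, and taking real and imaginary parts gives the inclusion \eqref{e:Nia}. To exclude a nontrivial Jordan block I argue by contradiction: if $(\tilde A-i\alpha I_n)w=v$ with $v$ as above, then since $\tilde A^\ast=-\tilde J-\tilde R$ gives $\tilde A^\ast v=-i\alpha v$, I compute $\|v\|^2=\langle v,(\tilde A-i\alpha I_n)w\rangle=\langle(\tilde A^\ast+i\alpha I_n)v,w\rangle=0$, a contradiction. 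Hence $\ker(A-i\alpha I_n)=\ker((A-i\alpha I_n)^2)$.

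For the decomposition (iii) I take the conservative subspace to be the largest $\tilde J$-invariant subspace contained in $\ker\tilde R$, namely $\tilde M_1=\bigcap_{k\ge 0}\ker(\tilde R\tilde J^k)$, and set $\tilde M_2=\tilde M_1^\perp$. By construction $\tilde R|_{\tilde M_1}=0$ and $\tilde J\tilde M_1\subseteq\tilde M_1$; since $\tilde J$ is skew and $\tilde R$ symmetric, $\tilde M_2$ is invariant under both $\tilde J$ (because $\tilde J^\ast=-\tilde J$ preserves $\tilde M_1^\perp$) and $\tilde R$ (because $\im\tilde R\subseteq\tilde M_1^\perp$). This yields block-diagonal forms $\tilde J=\diag(\tilde J_1,\tilde J_2)$ and $\tilde R=\diag(0,\tilde R_2)$ with $\tilde J_1,\tilde J_2$ skew-symmetric and $\tilde R_2\ge 0$. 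Transporting back via $M_i=Q^{-1/2}\tilde M_i$ produces the $Q$-orthogonal decomposition $\R^n=M_1\oplus_Q M_2$ of \eqref{e:mat_dec}, and the $Q$-skew-symmetry of $J_1,J_2$ together with the $Q$-positive semidefiniteness of $R_2$ follow since conjugation by $Q^{1/2}$ intertwines $JQ,RQ$ with $\tilde J,\tilde R$.

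The crux, and the step I expect to be the main obstacle, is showing that $\tilde A|_{\tilde M_2}=\tilde J_2-\tilde R_2$ is Hurwitz. By (i) its eigenvalues have non-positive real part, so it suffices to exclude purely imaginary ones. Here I use that any eigenvector $v$ of $\tilde A$ for $i\alpha$ satisfies $\tilde R v=0$ and $\tilde J v=i\alpha v$ by (ii), whence $\tilde R\tilde J^k v=(i\alpha)^k\tilde R v=0$ for all $k\ge 0$ and therefore $v\in(\tilde M_1)_\C$. Thus no imaginary-axis eigenvector lies in $(\tilde M_2)_\C\setminus\{0\}$; since $\sigma(\tilde A|_{\tilde M_2})\subseteq\sigma(\tilde A)$ consists of eigenvalues with non-positive real part, $\tilde A|_{\tilde M_2}$ has all eigenvalues with strictly negative real part, and Hurwitzness is preserved under the similarity back to $J_2-R_2$. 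This completes the argument.
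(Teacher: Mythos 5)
Your proof is correct. Parts (i) and (ii) are essentially identical to the paper's argument: the same eigenvector computation $\Re\la\,\|v\|^2=-\<\tilde Rv,v\>\le 0$, the same deduction $\tilde Rv=0$, $\tilde Jv=i\alpha v$ on the imaginary axis, and the same adjoint trick $\|v\|^2=\<(\tilde A^\ast+i\alpha I)v,w\>=0$ to kill nontrivial Jordan blocks. Where you genuinely diverge is in part (iii): the paper defines the conservative subspace spectrally, as $M_1=\linspan\{N_{i\alpha}(A):i\alpha\in\sigma(A)\}$, and obtains its $J$- and $R$-invariance from the inclusion \eqref{e:Nia}; you instead take $\tilde M_1=\bigcap_{k\ge0}\ker(\tilde R\tilde J^k)$, the largest $\tilde J$-invariant subspace inside $\ker\tilde R$ (i.e.\ the unobservable subspace of the pair $(\tilde R,\tilde J)$), for which invariance is automatic, and you only invoke the spectral information from (ii) at the end to rule out imaginary-axis eigenvalues of $\tilde A|_{\tilde M_2}$. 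The two subspaces in fact coincide --- your (ii) gives $\linspan\{N_{i\alpha}(A)\}\subset\tilde M_1$, while $\tilde A|_{\tilde M_1}=\tilde J|_{\tilde M_1}$ is skew-symmetric, hence diagonalizable with purely imaginary spectrum, giving the reverse inclusion --- so both proofs produce the same decomposition. Your construction has the advantage of defining $M_1$ without reference to the eigenstructure of $A$ and of making the link to (un)observability of $(R^{1/2}Q,JQ)$ explicit; the paper's is slightly more economical because invariance and Hurwitzness both drop out of \eqref{e:Nia} in one line. Both routes are sound.
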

\begin{proof}
In view of Remark~\ref{r:Q=I} we may assume WLOG that $Q=I$.

(i). Let $(J-R)x = \la x$ for some $\la\in\C$ and $x\in\C^n$, $\|x\|=1$. Then $\la = \<\la x,x\> = \<Jx,x\> - \<Rx,x\>$. Since $J$ is skew-symmetric, we have $\Re\<Jx,x\> = 0$ and thus $\Re\la\le 0$.

(ii). Let $\la = i\alpha$, $\alpha\in\R$, and assume that $(J-R)x = i\alpha x$. Then, by the same calculation as before, $Rx = 0$ and thus $Jx = i\alpha x$. This proves the inclusion \eqref{e:Nia}. Let $y\in\C^n$ such that $(J-R-i\alpha)y = x$. Then $\|x\|^2 = \<x,(J-R-i\alpha)y\> = \<(-J-R+i\alpha)x,y\> = 0$, so $x=0$ and hence $\ker(A-i\alpha I_n) = \ker((A-i\alpha I_n)^2)$.

(iii). Define $M_1 \doteq \linspan\{N_{i\alpha} : i\alpha\in\sigma(A)\}$. By \eqref{e:Nia}, this space is both $J$- and $R$-invariant. Hence, so is $M_2 \doteq M_1^\perp$. Since $R$ vanishes on $M_1$, it is clear that the representations of $J$ and $R$ with respect to the decomposition $\R^n = M_1\oplus M_2$ take the form \eqref{e:mat_dec}. Since $N_{i\alpha}(J_2-R_2)\subset N_{i\alpha}(J-R)\subset M_1$, and thus $N_{i\alpha}(J_2-R_2)=\{0\}$, it is also clear that $J_2-R_2$ is Hurwitz.
\end{proof}

\begin{remark}
	Note that $R_2$ might still have a non-trivial kernel.
\end{remark}

Now, with respect to the decomposition $\R^n = M_1\oplus_Q M_2$ from Lemma \ref{l:dec} the control system \eqref{eq:PHsys} takes the form
\begin{subequations}
	\label{eq:PHsys_dec}
\begin{align}
\label{eq:PHsys_dec_1}
\dot x_1 &= J_1x_1 + B_1u \qquad\qquad\quad\; x_1(0) = x_1^0\\
\label{eq:PHsys_dec_2}
\dot x_2 &= (J_2-R_2)x_2 + B_2u \qquad x_2(0) = x_2^0.
\end{align}
\end{subequations}
This decomposes the system into a conservative~
\eqref{eq:PHsys_dec_1} and a dissipative subsystem~\eqref{eq:PHsys_dec_2}. The conservative subspace $M_1$ is contained in the null space of $RQ$ which will play a specific role in the optimal control problem we consider below.
\subsection{Reachibility sets}
\noindent In this part we will briefly discuss the reachibility sets in view of the decomposition of \eqref{eq:PHsys_dec}.

\begin{lemma}[Description of the reachable set]\label{l:reachable}
Assume that system~\eqref{eq:PHsys} is controllable, i.e., $\rank(B,AB,\ldots,A^{n-1}B) = n$ for $A = (J-R)Q$, and that $\mathbb{U}$ is compact. Then the following statements hold:
\begin{enumerate}
	\item[(i)] For every state $x_1^\star \in M_1$ there exist a time $T>0$ and a control $u\in L^1(0,T;\mathbb{U})$ which steers $x_1^{0}$ to $x_1^\star$ at time $T$ under the dynamics in \eqref{eq:PHsys_dec_1}.
	\item[(ii)] The set of states in $M_2$ that can be reached from $x_2^{(0)}$ in arbitrary time under the dynamics \eqref{eq:PHsys_dec_2} is bounded in $M_2$.
\end{enumerate}
\end{lemma}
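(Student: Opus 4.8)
I would prove the two parts separately, handling (ii) first since it is the more elementary.

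\emph{Part (ii).} Since $J_2-R_2$ is Hurwitz by Lemma~\ref{l:dec}(iii), there are constants $C,\mu>0$ with $\|e^{t(J_2-R_2)}\|\le Ce^{-\mu t}$ for all $t\ge 0$. By variation of constants,
\[
x_2(t)=e^{t(J_2-R_2)}x_2^{0}+\int_0^t e^{(t-s)(J_2-R_2)}B_2u(s)\,ds .
\]
Because $\mathbb{U}$ is compact, $U_{\max}:=\sup_{u\in\mathbb{U}}\|u\|<\infty$, whence
\[
\|x_2(t)\|\le C\|x_2^{0}\|+C\|B_2\|U_{\max}\int_0^t e^{-\mu(t-s)}\,ds\le C\|x_2^{0}\|+\frac{C\|B_2\|U_{\max}}{\mu}
\]
uniformly in $t\ge 0$ and in the control. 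Thus the reachable set in $M_2$ lies in a ball and is bounded; this step presents no real difficulty.

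\emph{Part (i).} The plan has two stages. First I would show that the conservative subsystem $(J_1,B_1)$ is itself controllable. Since $M_1$ and $M_2$ are $A$-invariant with $\sigma(J_1)\subset i\R$ and $\sigma(J_2-R_2)\subset\{\Re<0\}$ disjoint, a Hautus/PBH argument applied to the block-diagonal form~\eqref{e:mat_dec} shows that controllability of $(A,B)$ is inherited by each diagonal block; in particular $(J_1,B_1)$ is controllable. Passing to the coordinates of Remark~\ref{r:Q=I}, I may assume $J_1^\top=-J_1$, so that $e^{tJ_1}$ is orthogonal.

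The decisive second stage is to upgrade controllability to \emph{global} reachability under the bounded input $\mathbb{U}$. Writing the solution in co-rotating coordinates $z=e^{-tJ_1}x_1$, reaching $x_1^\star$ from $x_1^{0}$ at some time $T$ amounts to $e^{-TJ_1}x_1^\star-x_1^{0}\in\tilde{\mathcal K}_T$, where $\tilde{\mathcal K}_T:=\{\int_0^T e^{-sJ_1}B_1u(s)\,ds:u\in L^1(0,T;\mathbb{U})\}$. Since $0\in\mathbb{U}$, controls can be padded by zero, so the convex sets $\tilde{\mathcal K}_T$ increase with $T$; as $\|e^{-TJ_1}x_1^\star-x_1^{0}\|\le\|x_1^\star\|+\|x_1^{0}\|$ is bounded, it suffices to prove $\bigcup_{T>0}\tilde{\mathcal K}_T=M_1$, i.e.\ that $\tilde{\mathcal K}_T$ eventually contains any prescribed ball. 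I would argue by convex duality: if the exhaustion failed there would be $0\ne p\in M_1$ with $\sup_{T}\int_0^T\sigma_{\mathbb{U}}(B_1^\top e^{sJ_1}p)\,ds<\infty$, where $\sigma_{\mathbb{U}}$ is the support function of $\mathbb{U}$; since $0\in\inte(\mathbb{U})$ gives $\sigma_{\mathbb{U}}(w)\ge\delta\|w\|$ for some $\delta>0$, this forces $\int_0^\infty\|B_1^\top e^{sJ_1}p\|\,ds<\infty$.

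Here lies the crux. As $J_1$ is skew-symmetric, hence diagonalizable with purely imaginary spectrum, the map $s\mapsto B_1^\top e^{sJ_1}p$ is a vector of trigonometric polynomials, and such an almost-periodic function is integrable on $[0,\infty)$ only if it vanishes identically; differentiating $B_1^\top e^{sJ_1}p\equiv 0$ repeatedly at $s=0$ yields $p\perp\linspan\{B_1,J_1B_1,\ldots\}=M_1$, contradicting controllability of $(J_1,B_1)$. Hence $p=0$ and the exhaustion holds. I expect the main obstacle to be precisely this interplay between the boundedness of the input and the non-decaying, recurrent nature of the conservative flow $e^{tJ_1}$: it is the recurrence (equivalently, the non-integrability of nonzero almost-periodic functions) that lets bounded controls accumulate enough effect to reach arbitrarily far in $M_1$, in sharp contrast to the dissipative block of part (ii).
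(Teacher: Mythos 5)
Your part (ii) is exactly the paper's argument (variation of constants, Hurwitz decay, compactness of $\mathbb{U}$), and it is correct. For part (i) you take a genuinely different and more self-contained route. The paper simply invokes the classical null-controllability theorem for bounded controls (\cite[Theorem 5, p.\ 45]{Macki2012}): since $\sigma(J_1)\subset i\R$, the state $x_1^0$ can be steered to $0$; applying the same theorem to the time-reversed system $\dot z_1=-J_1z_1-B_1v$ steers $x_1^\star$ to $0$, and reversing that trajectory in time and concatenating the two arcs reaches $x_1^\star$ from $x_1^0$. You instead essentially \emph{prove} the underlying controllability-with-bounded-inputs result: co-rotating coordinates, monotone convex reachable sets $\tilde{\mathcal K}_T$, separation via the support function $\sigma_{\mathbb U}$ (using $0\in\inte(\mathbb U)$ to get $\sigma_{\mathbb U}(w)\ge\delta\|w\|$), and the observation that a nonzero almost-periodic function $s\mapsto B_1^\top e^{sJ_1}p$ cannot be integrable on $[0,\infty)$, which contradicts controllability of $(J_1,B_1)$. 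Your argument is correct: the PBH step showing that controllability of $(A,B)$ passes to the block $(J_1,B_1)$ (which the paper leaves implicit but also needs), the exhaustion-of-balls step (justified by convexity and nestedness of the $\tilde{\mathcal K}_T$), and the recurrence argument all hold, the last one because $\|B_1^\top e^{sJ_1}p\|^2$ is a nonnegative trigonometric polynomial with positive mean unless it vanishes identically. What the citation buys the paper is brevity; what your proof buys is transparency about \emph{why} bounded controls suffice on the conservative block --- precisely the recurrence of the isometric flow $e^{tJ_1}$ --- and it avoids the slightly awkward detour through the origin and the time-reversed auxiliary system.
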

\begin{proof}
(i). Since $\sigma(J_1)\subset i\,\R$, by \cite[Theorem 5, p. 45]{Macki2012} there exist $T_1\ge 0$ and a control $u\in L^1(0,T_1;\mathbb{U})$ that steers $x_1^0$ into $0\in\R^n$ at time $T_1$. Let $x_1\in W^{1,1}(0,T_1;\mathbb{R}^n)$ denote the corresponding internal state. By the same reason there exist a time $T_2$ and a control $v\in L^1(0,T_2;\mathbb{U})$, which steers $x_1^\star $ to $0$ in time $T_2$ under the dynamics
$$
\dot z_1(t) = -J_1z_1(t) - B_1v(t).
$$
By $z_1$ denote the corresponding state solution. Set $T \doteq T_1+T_2$ and define $x_1(t) \doteq z_1(T-t)$ as well as $u(t) \doteq v(T-t)$, $t\in (T_1,T]$. Then $u\in L^1(0,T;\mathbb{U})$, $x_1$ is absolutely continuous on $[0,T]$, and
\begin{align*}
\dot x_1(t)
&= -\dot z_1(T-t) = J_1z_1(T-t) + B_1v(T-t)\\
&= J_1x(t) + B_1u(t)
\end{align*}
for $t\in (T_1,T]$. Also, $x_1(0) = x_1^0$ and $x_1(T) = z_1(0) = x_1^\star $.

(ii). This can be easily seen from the variation of constants formula. Indeed, for any control $u\in L^1(0,T;\mathbb{U})$ the solution of \eqref{eq:PHsys_dec_2} can be represented as
$$
x_2(t) = e^{tA_2}x_2^0 + \int_0^t  e^{(t-s)A_2}B_2u(s)\,ds,
$$
where $A_2 = J_2-R_2$. As $A_2$ is Hurwitz, there exists $\mu >0$, $M\geq 1$ such that $\|e^{tA_2}\|\le Me^{-\mu t}$. Hence,
\begin{align*}
\|x(t)\|
&\le Me^{-\mu t}\|x_2^0\| + \int_0^t  Me^{-\mu(t-s)}\|B_2\|\|u(s)\|\,ds\\
&\le M\|x_2^0\| + \frac{M}{|\mu|}\|B_2\|\Big(\max_{v\in \mathbb{U}}\|v\|\Big)
\end{align*}
for all times $t\ge 0$.
\end{proof}

\begin{corollary}
Under the assumptions of  Lemma \ref{l:reachable} the reachable states of the system \eqref{eq:PHsys} are contained in $M_1\oplus_Q K$, where $K\subset M_2$ is compact in $M_2$.
\end{corollary}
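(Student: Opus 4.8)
The plan is to show that this corollary is an essentially immediate consequence of the two parts of Lemma~\ref{l:reachable} combined with the finite-dimensionality of $M_2$. Let me write $\mathcal{R}$ for the set of all states reachable from the initial condition $x^0 = x_1^0 + x_2^0$ (with $x_1^0\in M_1$, $x_2^0\in M_2$) in arbitrary time under admissible controls $u\in L^1(0,T;\bbU)$. The key structural fact I would exploit is that, with respect to the $Q$-orthogonal decomposition $\R^n = M_1\oplus_Q M_2$ from Lemma~\ref{l:dec}, every solution $x$ of \eqref{eq:PHsys} splits as $x = x_1 + x_2$, where $x_1$ and $x_2$ obey the \emph{decoupled} dynamics \eqref{eq:PHsys_dec_1} and \eqref{eq:PHsys_dec_2}; in particular $x_2(t)$ depends only on $x_2^0$ and the control $u$, not on the conservative component. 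Hence the two factors can be analyzed independently.

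First I would observe that the $M_1$-component of any reachable state lies in $M_1$ by definition, so no constraint beyond membership in $M_1$ is imposed on the first factor. For the second factor I would invoke the uniform bound established in the proof of Lemma~\ref{l:reachable}(ii): writing $A_2 = J_2-R_2$ and using the Hurwitz estimate $\|e^{tA_2}\|\le Me^{-\mu t}$ together with the compactness of $\bbU$, one has
$$
\|x_2(t)\| \le M\|x_2^0\| + \frac{M}{\mu}\|B_2\|\Big(\max_{v\in\bbU}\|v\|\Big)
$$
for \emph{every} $t\ge 0$ and every admissible control, the maximum being finite precisely because $\bbU$ is compact. Denoting this horizon- and control-independent constant by $\rho$, the $M_2$-component of any reachable state lies in the closed ball $K \doteq \{z\in M_2 : \|z\|\le\rho\}$.

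To conclude, I would appeal to finite-dimensionality: since $M_2$ is a finite-dimensional subspace, the closed bounded set $K$ is compact. Combining the two component-wise observations, every reachable state $x_1+x_2\in\mathcal{R}$ satisfies $x_1\in M_1$ and $x_2\in K$, which is exactly the assertion $\mathcal{R}\subset M_1\oplus_Q K$ with $K$ compact.

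I do not anticipate any genuine difficulty here, as the statement is largely a repackaging of Lemma~\ref{l:reachable}. The single point deserving a moment of care is the passage from \emph{bounded} (as delivered by part~(ii)) to \emph{compact}, which is legitimate only because $M_2$ is finite-dimensional; one simply takes the closed ball of radius $\rho$, or equivalently the closure of the reachable set within $M_2$. I would also note that part~(i) of the lemma is not strictly required for the containment claimed here—it merely certifies that the conservative factor $M_1$ is fully exhausted, so that the description $M_1\oplus_Q K$ is sharp in the conservative directions.
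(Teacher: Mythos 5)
Your argument is correct and is exactly the intended (and only natural) route: the paper states this corollary without proof as an immediate consequence of Lemma~\ref{l:reachable}, relying on the same uniform bound on the $M_2$-component and the fact that a closed bounded subset of the finite-dimensional space $M_2$ is compact. Nothing is missing.
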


\section{OCP: MINIMUM ENERGY SUPPLY}\label{sec:ocp}
\noindent Having discussed control-theoretic properties of pH systems in the previous section, we now introduce the considered optimal control problem. In pH~systems, the energy supplied to the system is given by $u(t)^\top y(t)$, cf. the energy balance \eqref{eq:deriv_dissipativity}. This induces a very natural optimization objective  when performing a state transition, i.e., trying to find a control that steers the state from an initial value~$x_0$ to a target $x_T$.
\noindent Hence, we turn to the OCP
\begin{align}\label{OCP:terminal}
\begin{split}
\min_{u\in L^1(0,T;\mathbb{U})}& C(u)\doteq\int_0^T    u(t)^\top  y(t) \,\mathrm{d}t\\
\text{s.t.}\quad \dot{x}(t) &= (J-R)Qx(t) + Bu(t),\\
x(0)&=x^0,\quad x(T)=x_T,\\
y(t) &= B^\top Qx(t).
\end{split}
\end{align}
Note that using the energy balance equation \eqref{eq:deriv_dissipativity} the cost functional of \eqref{OCP:terminal} may be expressed as
\begin{align}\label{eq:dissipativity}
	C(u) = H(x(T))-H(x(0))  + \int_0^T  \|R^{\frac12}Qx(t)\|_2^2\,\mathrm{d}t.
\end{align}

As is well known, the task of steering $x^0$ to $x_T$ at (any) time $T$ is surely feasible in the case where $\mathbb U = \R^m$ and $((J-R)Q,B)$ is controllable. However, as Lemma \ref{l:reachable} shows, this is much more delicate if $\mathbb U$ is compact. We make the following assumption to ensure feasibility of the OCP~\eqref{OCP:terminal}.
\begin{assumption}\label{as:feasible}
There exists a control $u\in L^1(0,T;\bbU)$ which steers $x^0$ to $x_T$ at time $T$ under the dynamics in \eqref{eq:PHsys}.
\end{assumption}

Then the next proposition follows immediately from \cite[Theorem 2, p.\ 91]{Macki2012}.

\begin{proposition}[Existence of optimal solutions]\label{prop:ExistenceOptCon}
Under Assumption \ref{as:feasible} the OCP \eqref{OCP:terminal} has an optimal solution.
\end{proposition}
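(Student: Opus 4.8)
The plan is to derive existence from a standard result on optimal control with integral cost, namely \cite[Theorem 2, p.\ 91]{Macki2012}, so the real work is to verify that the hypotheses of that theorem are met by the OCP \eqref{OCP:terminal}. First I would recast the problem in Mayer/Bolza form amenable to the cited theorem: the running cost is $L(x,u) = u^\top B^\top Q x$, the dynamics $f(x,u) = (J-R)Qx + Bu$ are affine in $(x,u)$, the control set $\mathbb{U}$ is compact and convex, and the endpoint constraints $x(0)=x^0$, $x(T)=x_T$ are fixed. Assumption \ref{as:feasible} guarantees the admissible set of controls is nonempty, which is the indispensable feasibility prerequisite.

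Next I would check the two structural conditions that typical Filippov-type existence theorems demand: (a) the extended velocity set
\begin{equation*}
\mathcal{V}(x) \doteq \{(f(x,u),\,\eta) : u\in\mathbb{U},\ \eta \ge L(x,u)\}
\end{equation*}
is convex for each fixed $x$, and (b) an a priori bound on admissible trajectories so that one works over a compact set. For (a), since $f$ is affine in $u$ and $L$ is affine in $u$ (the cost is linear, not merely convex, in the control), the image of the compact convex set $\mathbb{U}$ under the map $u\mapsto (f(x,u),L(x,u))$ is itself convex, and augmenting the last coordinate upward preserves convexity; hence $\mathcal{V}(x)$ is convex. For (b), I would note that on the fixed horizon $[0,T]$ with $u$ taking values in the compact set $\mathbb{U}$, the variation-of-constants formula yields a uniform bound $\|x(t)\|\le e^{T\|A\|}\|x^0\| + T\,e^{T\|A\|}\|B\|\max_{v\in\mathbb{U}}\|v\|$, exactly as in the proof of Lemma \ref{l:reachable}(ii), so all admissible trajectories lie in a fixed compact ball; consequently the running cost is bounded below on this set and the cost functional is well-defined and bounded from below.

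With feasibility (from Assumption \ref{as:feasible}), the convexity of $\mathcal{V}(x)$, compactness of $\mathbb{U}$, and the a priori trajectory bound in hand, the hypotheses of \cite[Theorem 2, p.\ 91]{Macki2012} are satisfied and the existence of a minimizer follows directly. The argument there is the classical direct method: take a minimizing sequence of admissible pairs $(x_k,u_k)$, extract a weak-$*$ convergent subsequence of controls and a uniformly convergent subsequence of states (using the uniform bound and equi-absolute-continuity), and use the convexity of the velocity set to pass to the limit so that the limit pair is admissible and the cost is lower semicontinuous along the sequence.

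The main obstacle I expect is not in any single calculation but in confirming that the \emph{linearity} of the integrand $u^\top y = u^\top B^\top Q x$ is compatible with the cited theorem's convexity requirement. Because the cost is merely linear (and thus the OCP is singular, as the authors themselves emphasize), one must be careful that the relevant set $\mathcal{V}(x)$ is convex rather than appeal to strict convexity of $L$ in $u$; fortunately linearity suffices for convexity of the epigraph-augmented velocity set, so no coercivity or strict convexity in the control is needed. Once this is verified, the proposition is immediate, which is why the authors state that it "follows immediately."
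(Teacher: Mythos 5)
Your proposal is correct and follows exactly the same route as the paper, which simply invokes \cite[Theorem 2, p.\ 91]{Macki2012} together with Assumption \ref{as:feasible}; you merely spell out the verification of that theorem's hypotheses (compact convex $\mathbb{U}$, convexity of the augmented velocity set thanks to affineness of the dynamics and cost in $u$, the a priori trajectory bound, and nonemptiness of the admissible set). Nothing in your argument deviates from or adds a genuinely different idea to the paper's one-line proof.
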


\subsection{Necessary Optimality Conditions and Singular Arcs}

\noindent We deduce the first-order optimality conditions 
for OCP~\eqref{OCP:terminal}. By the PMP (see, e.g., \cite{Liberzon12}), for any optimal solution $(x^\star ,u^\star)$ of~\eqref{OCP:terminal} there is $(\lambda_0,\lambda)\in\mathbb{R}_{\geq 0}\times W^{1,1}(0,T;\mathbb{R}^n)$, $(\lambda_0,\lambda(t))\neq 0$ for all $t\in [0,T]$, such that 
\begin{align}\label{eq:OC_2}
\begin{split}
	\dot{x}^\star(t) &= \phantom{-}(J-R)Qx^\star(t) + Bu^\star(t)\\
	\dot{\lambda}(t)&= -\la_0QBu^\star(t) + Q(J+R)\lambda(t)\\
	u^\star(t) &\in \arg\min_{\tilde{u}\in \mathbb{U}}\,\tilde u^\top B^\top (\la_0Qx^\star(t)+\la(t)).
\end{split}
\end{align}
Due to the fact that the optimality Hamiltonian is affine linear in $u$, i.e.,\ the OCP is singular, we consider the $i$-th \textit{switching function}~$s_i$, $i \in\{1,\dots, m\}$, defined by
$$
s_i(t) \doteq \big(B^\top(\la_0Qx(t)+\la(t))\big)_i = b_i^\top(\la_0Qx(t)+\la(t)),
$$
where the $b_i$ denote the columns of the matrix $B$. Since $\tilde u^\top B^\top (\la_0Qx(t)+\la(t)) = \sum_i\tilde u_is_i(t)$ in~\eqref{eq:OC_2}, it follows that $u_i^\star = \underline{u}_i$ on the open set $\{t : s_i(t)>0\}$ and $u_i^\star = \overline{u}_i$ on $\{t:s_i(t)<0\}$. However, if
$$
Z_i\doteq\{t : s_i(t)=0\}
$$
has positive measure, the OCP is said to exhibit a {\em singular arc}~\cite{Liberzon12} and it is well understood that the presence of singular arcs complicates the analysis of OCPs, cf. the classical example of Fuller~\cite{Fuller60a}, see also~\cite{Liberzon12}. Here, however, we completely characterize the optimal control in dependence of the optimal state trajectory and the corresponding adjoint on such singular arcs under a certain structural assumption. 

\begin{theorem}[Singular controls]\label{t:sing}
Assume that $\la_0 > 0$ and $\im(B)\cap\ker(RQ) = \{0\}$ hold. If $(x,u,\la,\la_0)$ satisfies the optimality system~\eqref{eq:OC_2} of OCP~\eqref{OCP:terminal}, then $u$ is completely determined by $x$ and $\la$. 

Specifically, given a subset $\calI\subset\{1,\ldots,m\}$, set $\calA = \calI^c$, $u_\calI = (u_i)_{i\in\calI}^\top$, $u_\calA = (u_i)_{i\in\calA}^\top$, $B_\calI = (b_i)_{i\in\calI}$, and $B_\calA = (b_i)_{i\in\calA}$. Then we have
$$
u_\calI = M^{-1}B_\calI^\top\big[\tfrac 12(QA^2x + (A^2)^\top\la) - QRQB_\calA u_\calA\big],
$$
on $\bigcap_{i\in\calI}Z_i\setminus\bigcup_{i\in\calA}Z_j$, where $M = B_\calI^\top QRQB_\calI$ and $A = (J-R)Q$.
\end{theorem}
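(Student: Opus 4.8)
The plan is to determine the singular control by the classical device of differentiating the switching functions twice along a singular arc, the point being that the port-Hamiltonian structure forces the control to cancel at first order and to reappear at second order with coefficient exactly $-2QRQ$ acting on $Bu$. Introduce the combined adjoint variable $p \doteq \la_0 Qx + \la$, so that $s_i = b_i^\top p$. Since $\la_0 > 0$ and the PMP multipliers are determined only up to a common positive scaling, I would normalize $\la_0 = 1$. Recording the available regularity, $x,\la\in W^{1,1}$, so $p$ and each $s_i$ are absolutely continuous.

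First I would differentiate once. Substituting $\dot x = Ax + Bu$ and $\dot\la = -QBu + Q(J+R)\la$, the two copies of $QBu$ cancel and, using $A^\top = Q(J-R)^\top = -Q(J+R)$, one gets
\[
\dot p = QAx + Q(J+R)\la = QAx - A^\top\la .
\]
The essential gain is twofold: $\dot s_i$ is free of $u$, and $\dot p$ is itself absolutely continuous --- a continuous function of the absolutely continuous $x,\la$ --- which is precisely what licenses a second differentiation; had $u$ survived, $\dot p$ would only be $L^1$. Differentiating again and inserting the dynamics once more, the control reappears,
\[
\ddot p = QA^2x + (A^\top)^2\la - 2\,QRQ\,Bu ,
\]
where the coefficient follows from $QA + A^\top Q = Q(J-R)Q - Q(J+R)Q = -2QRQ$.

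The next step is to convert $s_i\equiv 0$ on $Z_i$ into conditions on the derivatives. Since $s_i$ is absolutely continuous and vanishes on $Z_i$, its derivative vanishes a.e.\ on $Z_i$ (almost every point of a measurable set is a density point); as $\dot s_i = b_i^\top\dot p$ is again absolutely continuous, the same reasoning yields $\ddot s_i = b_i^\top\ddot p = 0$ a.e.\ on $Z_i$. Stacking these over $i\in\calI$ on $\bigcap_{i\in\calI}Z_i$ gives $B_\calI^\top\ddot p = 0$, and splitting $Bu = B_\calI u_\calI + B_\calA u_\calA$ with $M = B_\calI^\top QRQ B_\calI$ this rearranges to
\[
M u_\calI = B_\calI^\top\big[\tfrac12\big(QA^2x + (A^2)^\top\la\big) - QRQ\,B_\calA u_\calA\big],
\]
which is the asserted identity once $M$ is invertible. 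On $\bigcap_{i\in\calI}Z_i\setminus\bigcup_{j\in\calA}Z_j$ the entries of $u_\calA$ obey the bang-bang law, since there $s_j\neq 0$, so the right-hand side indeed depends only on $x$ and $\la$.

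Finally I would establish invertibility of $M$, which is where the hypothesis $\im(B)\cap\ker(RQ)=\{0\}$ enters. As $QRQ$ is positive semidefinite, $v^\top M v = \|R^{1/2}QB_\calI v\|^2$, and this vanishes iff $RQB_\calI v = 0$, i.e.\ $B_\calI v\in\im(B)\cap\ker(RQ) = \{0\}$; since $B$ has full column rank so does the column submatrix $B_\calI$, forcing $v=0$. Hence $M$ is positive definite, $u_\calI$ is uniquely determined by $x$ and $\la$, and together with the bang-bang determination of $u_\calA$ off the singular arcs this shows $u$ is completely determined by $x$ and $\la$. The main obstacle I anticipate is not any single calculation but the coordinated bookkeeping of the two differentiations, together with the care needed both for the regularity argument that legitimizes the second derivative and for the measure-theoretic ``derivative vanishes where the function is constant'' step.
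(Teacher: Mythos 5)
Your proposal is correct and follows essentially the same route as the paper's proof: differentiate the switching functions twice along the singular arc, use $QA+A^\top Q=-2QRQ$ to isolate the control, and derive invertibility of $M=B_\calI^\top QRQB_\calI$ from $\im(B)\cap\ker(RQ)=\{0\}$ and the full column rank of $B$. The only cosmetic difference is that you normalize $\la_0=1$ up front (which the paper effectively does implicitly in its final formula) and you spell out the measure-theoretic justification for differentiating on $Z$ a bit more explicitly.
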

\begin{proof}
Let $s_\calI = B_\calI^\top(\la_0Qx+\la)$ and $Z = \bigcap_{i\in\calI}Z_i\setminus\bigcup_{i\in\calA}Z_j$. It is easy to see that $\dot s_\calI = 0$ a.e.\ on $Z$. Since $\dot s_\calI = B_\calI^\top(\la_0Q\dot x+\dot\la) = B_\calI^\top(\la_0QAx-A^\top\la)$ is absolutely continuous, it follows that also $\ddot s_\calI=0$ a.e.\ on $Z$. Therefore, setting $v = \frac 12(\la_0QA^2x + (A^2)^\top\la)$, we have
\begin{align*}
0
&= \ddot s_\calI = B_\calI^\top(\la_0QA\dot x-A^\top\dot\la)\\
&= B_\calI^\top\big[\la_0QA(Ax+Bu) - A^\top(-\la_0QBu-A^\top\la)\big]\\
&= B_\calI^\top\big[2v + \la_0(QA+ A^\top Q)Bu\big]\\
&= 2B_\calI^\top v - 2\la_0B_\calI^\top QRQBu,
\end{align*}
where we have used that $QA+ A^\top Q = -2QRQ$. Thus,
\begin{align*}
B_\calI^\top QRQB_\calI u_\calI = B_\calI^\top v - B_\calI^\top QRQB_\calA u_\calA,
\end{align*}
which proves the theorem. The matrix $B_\calI^\top QRQB_\calI$ is positive definite (and thus indeed invertible) since $\ker(B_\calI^\top QRQB_\calI) = \ker(RQB_\calI)$. So if $RQB_\calI z = 0$, then $B_\calI z\in\im(B)\cap\ker(RQ) = \{0\}$ and hence $B_\calI z=0$. As $B$ has full rank, we conclude $z=0$.
\end{proof}

In the following theorem we show that, if system~\eqref{eq:PHsys} is normal, $\lambda_0=0$ implies that there are no singular arcs.
\begin{theorem}
If $\la_0=0$ and system \eqref{eq:PHsys} is normal, i.e., the matrix $(b_i,Ab_i,\ldots,A^{n-1}b_i)$ is invertible for every column $b_i$ of $B$, then OCP~\eqref{OCP:terminal} does not exhibit singular arcs.
\end{theorem}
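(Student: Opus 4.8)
The plan is to argue by contradiction: suppose a singular arc exists, i.e.\ some switching function $s_i$ vanishes on a set $Z_i$ of positive measure. When $\la_0=0$, the switching function reduces to $s_i(t)=b_i^\top\la(t)$, so the adjoint $\la$ carries all the information. The strategy is to differentiate $s_i$ repeatedly in time and use the fact that on a set of positive measure all these derivatives must vanish almost everywhere, thereby extracting a full set of linear conditions on $\la$ that force it into a subspace incompatible with the normality hypothesis.

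First I would write down the adjoint dynamics from \eqref{eq:OC_2} with $\la_0=0$, which collapse to $\dot\la(t) = Q(J+R)\la(t) = -A^\top\la(t)$, since $A=(J-R)Q$ gives $A^\top = Q(J^\top - R^\top) = -Q(J+R)$ using skew-symmetry of $J$ and symmetry of $R$. Hence $\la$ solves a linear autonomous ODE and is real-analytic in $t$. On the singular arc, $s_i(t)=b_i^\top\la(t)=0$ for $t$ in a set of positive measure; by analyticity this forces $s_i\equiv 0$ on the whole interval, and therefore every time derivative $s_i^{(k)}\equiv 0$ as well. Next I would compute these derivatives: since $\dot\la = -A^\top\la$, one has $s_i^{(k)}(t) = b_i^\top (-A^\top)^k\la(t) = (-1)^k\big((A^k b_i)^\top\la(t)\big)$. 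Setting $k=0,1,\dots,n-1$ yields $(A^k b_i)^\top\la(t)=0$ for all $t$, i.e.\ $\la(t)$ is orthogonal to the span of $\{b_i, Ab_i,\ldots,A^{n-1}b_i\}$.

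The key step is then to invoke normality: the matrix $(b_i, Ab_i,\ldots, A^{n-1}b_i)$ is invertible, so its columns span all of $\R^n$. The orthogonality condition above therefore forces $\la(t)=0$ on the arc, and since $\la$ solves a homogeneous linear ODE, $\la\equiv 0$ on $[0,T]$. But with $\la_0=0$ this means $(\la_0,\la(t))=(0,0)$, contradicting the PMP nontriviality condition $(\la_0,\la(t))\neq 0$ for all $t$. Hence no singular arc can exist, which is the claim.

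I expect the main obstacle to be handling the measure-theoretic versus analytic regularity carefully. The derivatives $\ddot s_i$ and higher are computed in the earlier theorem via differentiation of an absolutely continuous function, and one must justify that vanishing a.e.\ on a positive-measure set propagates to vanishing of all derivatives. The cleanest route is the analyticity argument: because $\la$ solves $\dot\la=-A^\top\la$ it is analytic, so $s_i$ is analytic, and a real-analytic function vanishing on a set of positive (indeed, nonzero) measure vanishes identically, after which differentiation is unproblematic. An alternative, avoiding analyticity, is to mimic the iterated-differentiation argument of Theorem \ref{t:sing}: each successive derivative $s_i^{(k)}$ is again absolutely continuous on $Z_i$ and vanishes a.e.\ there, so one inductively obtains the $n$ conditions $(A^k b_i)^\top\la=0$ a.e.\ on $Z_i$, and normality then collapses $\la$ to zero on $Z_i$, again contradicting nontriviality. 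Either way the decisive ingredient is that normality makes the single-input reachability/observability Krylov matrix full rank, leaving no room for a nontrivial adjoint.
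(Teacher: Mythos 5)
Your proposal is correct and follows essentially the same route as the paper: with $\la_0=0$ the adjoint satisfies $\dot\la=-A^\top\la$, so $s_i^{(k)}=(-1)^k b_i^\top (A^k)^\top\la$, vanishing of $s_i$ on a positive-measure set forces the first $n$ derivatives to vanish there, normality then gives $\la=0$, contradicting the PMP nontriviality condition. Your analyticity observation is a clean way to justify the regularity step that the paper leaves implicit, but it does not change the substance of the argument.
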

\begin{proof}
Since $\la_0=0$, the $k$-th derivative of $s_i$ is given by $s_i^{(k)} = (-1)^kb_i^\top(A^k)^\top\la$ for $k=0,1,\ldots$. If $s_i=0$ on a set of positive measure $Z$, then also $s_i^{(k)}=0$ a.e.\ on $Z$ for $k=1,\ldots,n-1$. Since \eqref{eq:PHsys} is normal, it follows that $\la=0$ on $Z$. But this contradicts the condition that $(\la_0,\la(t))\neq 0$ for all $t\in [0,T]$.
\end{proof}

\subsection{The Lossless Case $R=0$}
\noindent If the considered pH~system is lossless (or conservative), the computation of optimal solutions can be simplified.
To this end, consider the free end-time counterpart to OCP \eqref{OCP:terminal}:
\begin{align}
\label{OCP:terminal_T}
\begin{split}
\min_{T\geq 0,u\in L^1(0,T;\mathbb{U})}& C(u)\doteq\int_0^T    u(t)^\top  y(t) \,\mathrm{d}t\\
\text{s.t.}\quad\dot{x}(t) &= JQx(t) + Bu(t),\\
x(0)&=x^0,\quad x(T)=x_T,\\
y(t) &= B^\top Qx(t).
\end{split}
\end{align}

\begin{lemma}[Feasibility implies optimality] %
Let the pH system~\eqref{eq:pHsys_compl} be controllable and lossless ($R=0$). Then, any feasible solution $u\in L^1(0,T;\mathbb{U})$ of the OCP~\eqref{OCP:terminal} is optimal. %
Moreover, any feasible solution to the free end-time problem  \eqref{OCP:terminal_T} delivers the same performance as optimal solutions in  OCP \eqref{OCP:terminal} (provided they exist).
\end{lemma}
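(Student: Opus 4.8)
The plan is to exploit the reformulation of the cost functional already provided in equation~\eqref{eq:dissipativity}. The crucial observation is that in the lossless case $R=0$ the dissipation integral vanishes identically, so the cost collapses to a pure boundary term that is fixed by the endpoint constraints of the OCP.

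First I would invoke~\eqref{eq:dissipativity}, namely $C(u)=H(x(T))-H(x(0))+\int_0^T\|R^{\frac12}Qx(t)\|_2^2\,\mathrm{d}t$, which is simply the integrated form of the energy balance~\eqref{eq:deriv_dissipativity} and is valid in the almost-everywhere sense for $x\in W^{1,1}(0,T;\mathbb{R}^n)$ driven by $u\in L^1(0,T;\mathbb{U})$. Setting $R=0$ eliminates the integral, leaving $C(u)=H(x(T))-H(x(0))$. Since the constraints of OCP~\eqref{OCP:terminal} impose $x(0)=x^0$ and $x(T)=x_T$, this reduces to $C(u)=H(x_T)-H(x^0)$, a number determined solely by the prescribed endpoints and therefore independent of the particular feasible control $u$. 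Consequently every feasible control attains one and the same cost, and hence each of them is trivially a minimizer, which proves the first assertion.

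For the second assertion I would apply the identical reasoning to the free end-time problem~\eqref{OCP:terminal_T}, which likewise has $R=0$ and the same endpoint constraints $x(0)=x^0$, $x(T)=x_T$. For any feasible pair $(T,u)$ the same computation yields $C(u)=H(x_T)-H(x^0)$, irrespective of the horizon length $T$, since that value never involves $T$. Thus every feasible solution of~\eqref{OCP:terminal_T} delivers exactly the value $H(x_T)-H(x^0)$, which by the first part coincides with the optimal value of~\eqref{OCP:terminal} whenever optimal solutions of the latter exist.

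I do not anticipate a genuine obstacle: the entire statement is an immediate corollary of the energy balance once $R=0$ is imposed, and the telescoping of the cost to a boundary term does the work. The only point warranting minor care is the validity of the integrated identity~\eqref{eq:dissipativity} for $W^{1,1}$ trajectories under $L^1$ inputs, but this is precisely the sense in which~\eqref{eq:deriv_dissipativity} was established, so the argument is rigorous as stated.
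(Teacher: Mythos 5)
Your argument is exactly the paper's: both proofs invoke the identity~\eqref{eq:dissipativity}, observe that for $R=0$ the cost collapses to the boundary term $H(x_T)-H(x^0)$ fixed by the endpoint constraints, and note that the same value is attained by any feasible solution of the free end-time problem~\eqref{OCP:terminal_T} since $T$ does not enter. The proposal is correct and merely spells out the details the paper leaves implicit.
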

\begin{proof}
The assertion follows from \eqref{eq:dissipativity} as for $R =0$ the value of the objective functional $C(u)$ is completely determined by the initial condition $x^0$ and the terminal condition $x_T$. Hence, also the free end-time problem \eqref{OCP:terminal_T} delivers the same performance. 
\end{proof}

This observation motivates to obtain an optimal solution of OCP~\eqref{OCP:terminal} via an auxiliary problem, for which the analytic solution is known. Indeed, by the bang-bang principle, if there is a control that steers $x^0$ to $x_T$, then there is a bang-bang control that does so as well, cf.\ \cite[Theorem 10, p.48]{Macki2012}.
Consequently, we seek a {\em time-optimal} solution, i.e., we solve
\begin{align}
\begin{split}\label{OCP:timeopt}
&\min_{T\geq 0,u\in L^1(0,T;\mathbb{U})}\;T\\
&\text{s.t.}\quad \dot{x}(t)=JQx(t)+Bu(t),\\
&x(0)=x^0,\quad  x(T)=x_T.
\end{split}
\end{align}
Since $Q$ is symmetric positive definite and $J$ is skew-symmetric for the considered pH~systems, $\sigma(JQ) = \sigma(Q^{1/2}JQ^{1/2})\subset i\,\mathbb{R}$ holds. The following lemma can be proved analogously to Lemma \ref{l:reachable} (i).

\begin{lemma}\label{l:cool}
Let \eqref{eq:PHsys} (with $R=0$) be controllable. Then for any $x^0,x_T\in\R^n$ there exist a time $T\ge 0$ and a control $u\in L^1(0,T;\mathbb{U})$ which steers $x^0$ to $x_T$ in time $T$.
\end{lemma}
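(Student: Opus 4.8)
The plan is to mirror the null-controllability-plus-time-reversal argument used for Lemma~\ref{l:reachable}~(i), now applied to the full lossless system $\dot x = JQx + Bu$ instead of only the conservative block. The structural input is the fact recorded just above the statement, namely $\sigma(JQ) = \sigma(Q^{1/2}JQ^{1/2}) \subset i\R$, so $A = JQ$ has no eigenvalue with positive real part. Combined with the controllability hypothesis, \cite[Theorem 5, p.~45]{Macki2012} then gives global null-controllability, providing a time $T_1 \ge 0$ and a control $u_1 \in L^1(0,T_1;\mathbb{U})$ that steers $x^0$ to $0$.

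Next I would construct the second half of the trajectory by steering $x_T$ to $0$ under the time-reversed dynamics $\dot z = -JQz - Bv$, obtaining $T_2 \ge 0$, a control $v \in L^1(0,T_2;\mathbb{U})$, and a state $z$ with $z(0) = x_T$, $z(T_2) = 0$. I then set $T = T_1 + T_2$, keep $u_1$ on $[0,T_1]$, and put $u(t) = v(T-t)$, $x(t) = z(T-t)$ on $(T_1,T]$. A direct differentiation gives $\dot x(t) = JQx(t) + Bu(t)$ on the second piece---the minus sign in the reversed dynamics cancels the one produced by $\tfrac{d}{dt}z(T-t)$, so no sign symmetry of $\mathbb{U}$ is required---while $x(0)=x^0$, $x(T_1)=0$, and $x(T)=z(0)=x_T$. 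Since both pieces take values in $\mathbb{U}$ and are absolutely integrable, the concatenated control lies in $L^1(0,T;\mathbb{U})$ and $x \in W^{1,1}(0,T;\R^n)$.

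The one step needing genuine care, which I would single out as the main (if mild) obstacle, is the second invocation of \cite[Theorem 5, p.~45]{Macki2012} for the reversed system: I must check that $-JQ = (-J)Q$ again satisfies $\sigma(-JQ) \subset i\R$ (immediate, since $-J$ is skew-symmetric and $Q$ is symmetric positive definite) and that $(-JQ,B)$ is controllable. The latter follows because $(-JQ)^k B = (-1)^k (JQ)^k B$, so the Kalman matrices of the two systems differ only by sign changes of columns and hence share the same rank $n$. Once these hypotheses are verified, the remainder is the routine concatenation above, with the compactness of $\mathbb{U}$ and $0 \in \inte(\mathbb{U})$ being exactly the data the cited theorem consumes.
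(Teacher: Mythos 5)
Your proof is correct and follows exactly the route the paper intends: the paper simply remarks that the lemma ``can be proved analogously to Lemma~\ref{l:reachable}~(i)'', i.e., null-controllability via \cite[Theorem 5, p.~45]{Macki2012} (using $\sigma(JQ)\subset i\R$) followed by time reversal of a trajectory steering $x_T$ to $0$ under $\dot z = -JQz - Bv$. Your additional check that $(-JQ,B)$ inherits controllability is a welcome detail the paper leaves implicit.
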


Hence we arrive at the main insight of this section: under the assumption of lossless-ness, optimal solutions for OCP~\eqref{OCP:terminal_T} can be obtained solving \eqref{OCP:timeopt}. Moreover, the performance of these solutions evaluated in the objective \eqref{OCP:terminal} is identical to solving OCP \eqref{OCP:terminal} directly.

\section{DISSIPATIVITY, TURNPIKE AND STEADY STATES} 
\label{sec:DI}
\noindent We analyse the OCP~\eqref{OCP:terminal} in a dissipativity framework for the general (dissipative) case $R \not = 0$. %
Indeed beginning with \cite{Angeli12a} there has been widespread interest in dissipativity notions of OCPs in context of model predictive control, see \cite{Stieler14a,epfl:faulwasser15h,Gruene2019a}. The driving force behind these investigations is the close relation between dissipativity and turnpike properties of OCPs~\cite{epfl:faulwasser15h,Gruene2016a,Trelat2020}. 

\subsection{Equilibria of the extremal dynamics}

\noindent Consider the steady-state problem corresponding  to OCP \eqref{OCP:terminal}, i.e.,
\begin{align}\label{OCP:steady}
\begin{split}
&\min_{\hat{u}\in \mathbb{U}} \hat{u}^\top \hat{y}\\
\text{s.t.}\quad0 &= (J-R)Q\hat{x} + B\hat{u},\\
\hat{y} &= B^\top Q\hat{x}.
\end{split}
\end{align}
The first-order necessary conditions, cf. \cite{Troeltzsch2010}, are the optimality system of \eqref{eq:OC_2} considered at steady state, i.e., if $(\hat{x},\hat{u})$ solves \eqref{OCP:steady}, there exists a Lagrange multiplier $\hat{\lambda}\in \mathbb{R}^n$ such that 
\begin{align}
\begin{split}\label{eq:steady_sys}
0&= (J-R)Q\hat{x} + B\hat{u}\\
0 &= - B\hat{u} + (J+R)\hat{\lambda}\\
\hat{u}^\top \big(B^\top (Q\hat{x}+\hat{\lambda})\big)&\leq \tilde{u}^\top \big(B^\top (Q\hat{x}+\hat{\lambda})\big) \quad \forall \tilde{u}\in \mathbb{U}.
\end{split}
\end{align}

\begin{theorem}\label{t:oss}
If $(\hat x,\hat u,\hat\la)$ is an optimal steady state, then 
\begin{align*}
B^\top (Q\hat x+\hat\la) = 0
\quad\text{and}\quad
J(Q\hat x+\hat\la) = R\hat\la = RQ\hat x = 0.
\end{align*}
In particular, if $J-R$ is invertible or $((J-R)Q,B)$ is controllable, then $Q\hat x + \hat\la = 0$ holds.
\end{theorem}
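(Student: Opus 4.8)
The plan is to exploit the passivity structure hidden in the cost together with the two equality constraints in \eqref{eq:steady_sys}, and then to invoke $0\in\inte(\bbU)$ in the variational inequality. First I would eliminate $B\hat u$: the state equation gives $B\hat u = (R-J)Q\hat x$, while the adjoint equation gives $B\hat u = (J+R)\hat\la$. Transposing each expression for $\hat u^\top B^\top$ and substituting (the first into the $Q\hat x$-term, the second into the $\hat\la$-term) into the scalar $\hat u^\top B^\top(Q\hat x+\hat\la)$, the skew-symmetry $J^\top=-J$ annihilates the cross terms $(Q\hat x)^\top J Q\hat x$ and $\hat\la^\top J\hat\la$, leaving the crucial identity
\[
\hat u^\top B^\top(Q\hat x + \hat\la) = \|R^{\frac12}Q\hat x\|_2^2 + \|R^{\frac12}\hat\la\|_2^2 \;\ge\; 0.
\]

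Second, I would insert the admissible choice $\tilde u = 0\in\bbU$ into the variational inequality (third line of \eqref{eq:steady_sys}), which yields $\hat u^\top B^\top(Q\hat x+\hat\la)\le 0$. Combined with the identity above, both squares must vanish, giving \emph{simultaneously} $RQ\hat x = 0$ and $R\hat\la = 0$. Next, writing $w \doteq B^\top(Q\hat x+\hat\la)$, we now know $\hat u^\top w = 0$, i.e.\ $0$ is the minimal value of the linear functional $\tilde u\mapsto\tilde u^\top w$ over $\bbU$. Since $0\in\inte(\bbU)$, a small step from $0$ in the direction $-w$ remains feasible and strictly lowers the functional below $0$ unless $w=0$; hence $B^\top(Q\hat x+\hat\la)=0$. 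Re-inserting $RQ\hat x=0$ and $R\hat\la=0$ into $B\hat u=(R-J)Q\hat x=(J+R)\hat\la$ collapses the two sides to $-JQ\hat x$ and $J\hat\la$, so $J(Q\hat x+\hat\la)=0$. Together with $R\hat\la=RQ\hat x=0$ this is exactly the first assertion of the theorem.

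Finally, for the ``in particular'' part, set $p \doteq Q\hat x+\hat\la$. The relations just proved give $Jp=0$ and $Rp = RQ\hat x+R\hat\la = 0$, hence $(J-R)p=0$, so $p=0$ whenever $J-R$ is invertible. For the controllability case I would compute $A^\top p = -Q(J+R)p = -Q(Jp+Rp) = 0$, so that $p$ is a left null vector of $A=(J-R)Q$ satisfying $B^\top p=0$; by the Hautus/PBH test, controllability of $((J-R)Q,B)$ forbids a nonzero such $p$, whence $Q\hat x+\hat\la=0$.

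The one genuinely non-mechanical step is the identity in the first paragraph: recognizing that the a priori sign-indefinite quantity $\hat u^\top B^\top(Q\hat x+\hat\la)$ is in fact a sum of two dissipation-type squares once \emph{both} equality constraints are used. Everything downstream (the interior-point argument forcing $w=0$, and the Hautus test) is standard. The point to verify carefully is the bookkeeping of transposition signs, since it is precisely $J^\top=-J$ and $R^\top=R$ that cancel the cross terms and keep the two $R$-weighted squares; an error there would spoil both the nonnegativity and the subsequent conclusions.
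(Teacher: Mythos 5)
Your proof is correct and follows essentially the same route as the paper: the same passivity identity expressing $\hat u^\top B^\top(Q\hat x+\hat\la)$ as a sum of $R$-weighted squares (the paper reaches it via the intermediate quantity $-\langle (J-R)Q\hat x,\hat\la\rangle$, you by substituting the two expressions for $B\hat u$ directly, which is only a cosmetic difference), the same use of $\tilde u=0\in\inte(\bbU)$ in the variational inequality to force both squares and then $B^\top(Q\hat x+\hat\la)$ to vanish, and the same back-substitution to get $J(Q\hat x+\hat\la)=0$. For the controllability case the paper argues via the null space of the transposed Kalman matrix while you invoke the Hautus test; these are equivalent, and your bookkeeping of the transposition signs checks out.
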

\begin{proof}
Set $A = (J-R)Q$ and let $(\bar x,\bar u,\bar\la)$ be a solution of \eqref{eq:steady_sys}. Then
\begin{align*}
\<R\bar\la,\bar\la\>
&= \<(J+R)\bar\la,\bar\la\> = \<B\bar u,\bar\la\> = -\<A\bar x,\bar\la\>\\
\<RQ\bar x,Q\bar x\>&= \<(R-J)Q\bar x,Q\bar x\> = \<B\bar u,Q\bar x\>\\
&= \<Q(J+R)\bar\la,\bar x\> = -\<A\bar x,\bar\la\>.
\end{align*}
This shows $\|R^{1/2}\bar\la\|^2 = \|R^{1/2}Q\bar x\|^2 = -\<A\bar x,\bar\la\>$. But on our way we also saw that
$$
-\<A\bar x,\bar\la\> = \tfrac 12\big(\<B\bar u,\bar\la\> + \<B\bar u,Q\bar x\>\big) = \tfrac 12\<B\bar u,Q\bar x+\bar\la\>.
$$
Thus, $\bar u^\top B^\top(Q\bar x + \bar\la) = \|R^{1/2}Q\bar x\|^2 = \|R^{1/2}\bar\la\|^2\ge 0$ for each admissible $(\bar x,\bar u,\bar\la)$. For the optimal triple $(\hat x,\hat u,\hat\la)$ we conclude that
$$
0 = \hat{u}^\top \big(B^\top (Q\hat{x}+\hat{\lambda})\big)\le\tilde{u}^\top \big(B^\top (Q\hat{x}+\hat{\lambda})\big)
$$
for all $\tilde{u}\in \mathbb{U}$ and therefore $B^\top (Q\hat{x}+\hat{\lambda}) = 0$. Finally, we conclude from \eqref{OCP:steady} and \eqref{eq:steady_sys} that $J(Q\hat x + \hat\la) = 0$.

As to the ``in particular''-part, let $\hat z = Q\hat x + \hat\la$ and note that $B^\top\hat z = 0$ and $A^\top\hat z = 0$. If $J-R$ is invertible, then so is $A$ and $\hat z=0$ follows immediately. Moreover, $\hat z$ is contained in the null space of the transpose of the Kalman matrix $(B,AB,\dots,A^{n-1}B)$ and hence vanishes if $(A,B)$ is controllable.
\end{proof}

\subsection{Strict dissipativity and the turnpike property}
\noindent We first present a lemma that relates the dissipation term in the right-hand side of $\eqref{eq:deriv_dissipativity}$ to the distance to the kernel of $R^{\frac12}Q$ denoted by $\operatorname{dist}(x,\ker(R^{\frac12}Q))\doteq \inf_{v\in \ker(R^{1/2}Q)}\|v-x\|$.

\begin{lemma}\label{l:dist}
There are constants $c_1,c_2 > 0$ such that
\begin{align*}
c_1\operatorname{dist}(x,\ker R^{\frac12}Q)\leq \|R^{\frac12}Qx\|\leq c_2 \operatorname{dist}(x,\ker R^{\frac12}Q)
\end{align*}
for all $x\in \mathbb{R}^n$.
\end{lemma}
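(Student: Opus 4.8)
The plan is to reduce everything to the study of the single linear map $L \doteq R^{\frac12}Q \in \R^{n\times n}$, exploiting the fact that the quantity $\operatorname{dist}(x,\ker L)$ is nothing but the norm of the component of $x$ orthogonal to $\ker L$. First I would fix the orthogonal decomposition $\R^n = \ker L \oplus (\ker L)^\perp$ and write $x = x_0 + x_1$ accordingly, with $x_0 \in \ker L$ and $x_1 \in (\ker L)^\perp$. Since the orthogonal projection onto a subspace realizes the closest point, we have $\operatorname{dist}(x,\ker L) = \|x_1\|$. At the same time $Lx_0 = 0$, so $Lx = Lx_1$. Both sides of the claimed inequality therefore depend only on $x_1$, and it suffices to produce constants $c_1,c_2 > 0$ with $c_1\|x_1\| \le \|Lx_1\| \le c_2\|x_1\|$ for all $x_1 \in (\ker L)^\perp$.

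The upper bound is immediate: taking $c_2 \doteq \|L\|$ (the operator norm) gives $\|Lx_1\| \le \|L\|\,\|x_1\|$ for every $x_1$, hence also on $(\ker L)^\perp$. For the lower bound I would use that the restriction $L|_{(\ker L)^\perp}$ is injective by the very definition of the kernel, so the continuous map $x_1 \mapsto \|Lx_1\|$ is strictly positive on the unit sphere $S \doteq \{x_1 \in (\ker L)^\perp : \|x_1\| = 1\}$. Because $(\ker L)^\perp$ is finite-dimensional, $S$ is compact, and thus the infimum $c_1 \doteq \min_{x_1 \in S}\|Lx_1\|$ is attained and is therefore strictly positive. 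Homogeneity of $L$ then yields $\|Lx_1\| \ge c_1\|x_1\|$ for all $x_1 \in (\ker L)^\perp$. Combining the two bounds with the identities $Lx = Lx_1$ and $\operatorname{dist}(x,\ker L) = \|x_1\|$ gives the assertion.

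I expect the only genuinely delicate point to be the strict positivity of $c_1$, i.e.\ arguing that the lower constant cannot degenerate to zero; this is exactly where injectivity of $L$ on $(\ker L)^\perp$ together with compactness of the unit sphere in the finite-dimensional space $(\ker L)^\perp$ enters. Everything else is routine linear algebra. As an alternative that makes the constants explicit, one may instead invoke the singular value decomposition of $L$: the claim holds with $c_1$ the smallest nonzero singular value and $c_2$ the largest singular value of $R^{\frac12}Q$, which I would mention only as a remark since the compactness argument already suffices.
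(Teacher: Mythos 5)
Your proof is correct and takes essentially the same route as the paper: the paper decomposes $\R^n$ orthogonally along $\ker(R^{\frac12}Q)$ via the Gram matrix $D = QRQ$ (noting $\|R^{\frac12}Qx\|^2 = \langle Dx,x\rangle$) and bounds by the extreme eigenvalues of the positive definite block, which is exactly your singular-value remark. Your compactness argument for the strict positivity of $c_1$ is an equivalent substitute for the paper's use of the smallest eigenvalue of $D_2$.
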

\begin{proof}
Let $D \doteq QRQ$. We have $\ker D = \ker(R^{1/2}Q)$. Now, decompose $\R^n = \ker D\oplus\im D$ and write $D = 0\oplus D_2$, where $D_2$ is positive definite. If $x\in\R^n$, also decompose $x = x_1\oplus x_2$ accordingly. Then $\|R^{\frac 12}Qx\|^2 = \<Dx,x\> = \<D_2x_2,x_2\>$. This implies $\la_{\min}^{1/2}\|x_2\|\le\|R^{\frac 12}Qx\|\le\la_{\max}^{1/2}\|x_2\|$, where $\la_{\min}$ ($\la_{\max}$) is the smallest (largest, resp.) eigenvalue of $D_2$. The claim now follows from $\|x_2\| = \operatorname{dist}(x,\ker D)$.
\end{proof}

Next we present a generalization of the notion of strict dissipativity for OCPs which has appeared first in a discrete-time context in \cite{Angeli12a}. Here, we introduce a novel notion that formulates dissipativity with respect to a subspace. Similar ideas have been developed in \cite{Villanueva2020}, where the authors consider dissipativity with respect to a compact set.

$\mathcal K$ denotes the set of continuous and strictly increasing functions $\alpha$ from $[0,\infty)$ into itself with $\alpha(0) = 0$.

\begin{definition}[Dissipativity with respect to subspaces]\label{def:DI_V}
Let $\ell :\R^n\times \mathbb{R}^m\to\R$ be a running cost, $A\in\R^{n\times n}$, and $B\in\R^{n\times m}$. An OCP of the form
\begin{align}\label{OCP:linear}
\begin{split}
\min_{u\in L^1(0,T;\mathbb{U})} &\int_0^T \ell(x(t),u(t))\,dt\\
\text{s.t.}\quad\dot{x}(t) &= Ax(t) + Bu(t),\\
x(0) &= x^0,\quad x(t) = x_T
\end{split}
\end{align}
is said to be {\em strictly dissipative} with respect to a subspace $\mathcal{V}\subset\mathbb{R}^n$ if there exists a storage function $S:\mathbb{R}^n\to[0,\infty)$ and a function $\alpha\in\mathcal{K}$ such that all optimal controls $u^\star\in L^1(0,T;\mathbb{U})$ of \eqref{OCP:linear} and associated states $x^\star\in W^{1,1}(0,T;\mathbb{R}^n)$ satisfy the dissipation inequality
\begin{align}\label{eq:sDI}
\begin{split}
S(x_T)- S(x^0) \leq  \int_0^T\ell(x^\star(t),u^\star(t))-\alpha(\operatorname{dist}(x^\star(t),\mathcal{V}))\,\mathrm{d}t.
\end{split}
\end{align}
\end{definition}

If $\mathcal{V}=\{0\}$, the above definition coincides with the usual definition of strict dissipativity, cf.\ \cite{epfl:faulwasser15h,Gruene2019a}.

An immediate consequence of this definition 
is the following turnpike result stating that on a large portion of the horizon $[0,T]$, the optimal trajectories of \eqref{OCP:linear} reside close to the subspace $\mathcal{V}$. 
If the system is strictly dissipative with respect to a steady state $\bar{x}\in \mathbb{R}^n$, i.e., setting $\mathcal{V}=\{\bar{x}\}$ in Definition~\ref{def:DI_V}, one can conclude a turnpike property respect to this steady state, cf.\ the recent overview article \cite{tudo:faulwasser21b}. The next lemma provides an extension of this concept to subspaces.

\begin{lemma}[Str. dissipativity implies subspace turnpike]\label{l:turnpike}
Denote by $x(\cdot,x^0,u)$ the solution of the ODE in \eqref{OCP:linear} with initial value $x^0$ and control $u\in L_1(0,T;\mathbb{U})$. Assume that the OCP \eqref{OCP:linear} is strictly dissipative with respect to a subspace $\mathcal{V}\subset\mathbb{R}^n$ and that
\begin{enumerate}
\item[(a)] there are $T_1\geq 0$ and $u_1 \in L_1(0,T_1;\mathbb{U})$ such that $x(T_1,x^0,u_1)=0$.
\item[(b)] there are $T_2\geq 0$ and $u_2 \in L_1(0,T_2;\mathbb{U})$ such that $x(T_2,0,u_2)=x_T$.
\end{enumerate}
Then, for all compact sets $K\subset\mathbb{R}^n$ and $\varepsilon> 0$ there is $C_{K,\varepsilon}>0$ independent of $T$ such that for all optimal solutions $x^\star (t)$ starting in $K$,
\begin{align}
\label{eq:turnpike_V}
\mu[t\in [0,T] :\operatorname{dist}(x^\star (t),\mathcal{V}) \geq \varepsilon]< C_{K,\varepsilon},
\end{align}
where $\mu$ denotes the standard Lebesgue measure on $\mathbb{R}$.
\end{lemma}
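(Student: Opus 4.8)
The plan is to run the classical ``strict dissipativity implies measure turnpike'' argument, here adapted to a subspace. The skeleton has three ingredients: first, a $T$-independent upper bound on the optimal value of \eqref{OCP:linear} obtained from a single cheap feasible control; second, the dissipation inequality \eqref{eq:sDI}, which converts this into a $T$-independent bound on $\int_0^T\alpha(\operatorname{dist}(x^\star(t),\mathcal V))\,dt$; and third, a Chebyshev-type estimate turning the integral bound into the claimed measure bound.

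First I would build a reference control. For horizons $T\geq T_1+T_2$, concatenate $u_1$ on $[0,T_1]$ (which drives $x^0$ to $0$ by~(a)), the zero control on $[T_1,T-T_2]$ (which keeps the state at the equilibrium $0$ of $\dot x=Ax$, and is admissible since $0\in\inte(\bbU)$), and the shifted $u_2$ on $[T-T_2,T]$ (which drives $0$ to $x_T$ by~(b)). Denoting the resulting feasible pair by $(x_{\mathrm{ref}},u_{\mathrm{ref}})$, its cost splits as
\[
\int_0^T \ell(x_{\mathrm{ref}},u_{\mathrm{ref}})\,dt
=\int_0^{T_1}\!\ell(x_{\mathrm{ref}},u_{\mathrm{ref}})\,dt
+(T-T_1-T_2)\,\ell(0,0)
+\int_{T-T_2}^{T}\!\ell(x_{\mathrm{ref}},u_{\mathrm{ref}})\,dt.
\]
The middle segment is free since $\ell(0,0)=0$, which holds for the impedance running cost $\ell(x,u)=u^\top B^\top Qx$ (the reference point $0$ lies in the turnpike subspace $\mathcal V$). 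Hence the total equals a constant $\overline{V}$ independent of $T$, and by optimality $\int_0^T\ell(x^\star,u^\star)\,dt\leq\overline{V}$.

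Next I would apply \eqref{eq:sDI} to the optimal pair and use $S\geq 0$ to obtain
\[
\int_0^T\alpha(\operatorname{dist}(x^\star(t),\mathcal V))\,dt
\leq\int_0^T\ell(x^\star,u^\star)\,dt+S(x^0)-S(x_T)
\leq\overline{V}+S(x^0),
\]
a bound independent of $T$. On $E_\varepsilon\doteq\{t\in[0,T]:\operatorname{dist}(x^\star(t),\mathcal V)\geq\varepsilon\}$ the integrand is at least $\alpha(\varepsilon)>0$ (since $\alpha\in\mathcal K$), so $\alpha(\varepsilon)\,\mu(E_\varepsilon)\leq\overline{V}+S(x^0)$ and thus $\mu(E_\varepsilon)\leq(\overline{V}+S(x^0))/\alpha(\varepsilon)$. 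For short horizons $T<T_1+T_2$ the trivial estimate $\mu(E_\varepsilon)\leq T<T_1+T_2$ applies, and taking the maximum of the two yields a $T$-independent bound.

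The remaining work, and the point I expect to be the \emph{main obstacle}, is making every constant uniform over $x^0\in K$ so that $C_{K,\varepsilon}$ depends only on $K$ and $\varepsilon$. This requires a uniform-in-$x^0$ bound on $\overline{V}$ --- i.e.\ uniform control of the steering cost to the origin over $K$, which I would extract from a uniform reachability time on the compact set $K$ together with the boundedness of $\ell$ along trajectories driven by controls in the compact set $\bbU$ --- as well as boundedness of $S$ on $K$. Granting these, one may take $C_{K,\varepsilon}\doteq\max\{T_1+T_2,\ (\overline{V}+\sup_{x^0\in K}S(x^0))/\alpha(\varepsilon)\}$. The genuinely delicate step is the uniform steering cost, for which I would lean on controllability together with a compactness argument over $K$.
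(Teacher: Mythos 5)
Your proposal is correct and follows exactly the classical ``cheap reference control $+$ dissipation inequality $+$ Chebyshev'' argument that the paper itself invokes by citing \cite[Section 3]{Gruene2019a} and \cite[Theorem 2]{epfl:faulwasser15h} rather than writing it out. You even make explicit two points the paper leaves implicit --- the normalization $\ell(0,0)=0$ needed for the middle segment to be free, and the uniformity of the steering cost over the compact set $K$ --- so no substantive comparison is needed.
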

\begin{proof}
The claim follows by straightforward adaptation of the proofs in \cite[Section 3]{Gruene2019a} or \cite[Theorem 2]{epfl:faulwasser15h}.
\end{proof}

\begin{theorem}[Strict dissipativity of  OCP~\eqref{OCP:terminal}]\label{t:ph_turnpike}
OCP \eqref{OCP:terminal} is strictly dissipative with respect to $\ker(R^{\frac 12}Q)$. Moreover, if \eqref{eq:PHsys} is controllable and $x_T$ is reachable from $0$ under the dynamics in \eqref{eq:PHsys}, optimal trajectories of \eqref{OCP:terminal} exhibit a subspace turnpike behaviour as described in Lemma \ref{l:turnpike}.
\end{theorem}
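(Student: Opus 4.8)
The plan is to treat the two assertions in turn, relying on the energy balance as the structural heart of the dissipativity claim and then feeding its hypotheses into Lemma~\ref{l:turnpike} for the turnpike conclusion. For strict dissipativity I would take the storage function to be the energy Hamiltonian itself, $S \doteq H$; this is an admissible choice for Definition~\ref{def:DI_V} because $Q$ is positive definite, whence $S \geq 0$. With running cost $\ell(x,u) = u^\top y = u^\top B^\top Q x$, the identity \eqref{eq:dissipativity}, which holds along any feasible trajectory $(x,u)$ of \eqref{OCP:terminal}, rearranges to
\begin{align*}
S(x_T) - S(x^0) = H(x(T)) - H(x(0)) = \int_0^T \ell(x(t),u(t))\,\mathrm{d}t - \int_0^T \|R^{\frac12}Qx(t)\|_2^2\,\mathrm{d}t.
\end{align*}
It therefore suffices to bound the dissipation integrand $\|R^{\frac12}Qx\|_2^2$ from below by a $\mathcal K$-function of the distance to $\mathcal V \doteq \ker(R^{\frac12}Q)$.

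Precisely this lower bound is supplied by Lemma~\ref{l:dist}: with the constant $c_1 > 0$ from that lemma one has $\|R^{\frac12}Qx\|_2^2 \geq c_1^2\operatorname{dist}(x,\ker(R^{\frac12}Q))^2$. Setting $\alpha(r) \doteq c_1^2 r^2$, which clearly lies in $\mathcal K$, and substituting into the displayed identity yields
\begin{align*}
S(x_T) - S(x^0) \leq \int_0^T \ell(x(t),u(t)) - \alpha(\operatorname{dist}(x(t),\mathcal V))\,\mathrm{d}t,
\end{align*}
which is exactly the dissipation inequality \eqref{eq:sDI}. Since this holds along \emph{every} feasible trajectory---not only the optimal ones required by Definition~\ref{def:DI_V}---strict dissipativity of \eqref{OCP:terminal} with respect to $\ker(R^{\frac12}Q)$ follows immediately.

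For the turnpike statement I would invoke Lemma~\ref{l:turnpike} with $\mathcal V = \ker(R^{\frac12}Q)$, so it remains only to verify its reachability hypotheses (a) and (b). Hypothesis (b)---steering $0$ to $x_T$ with an admissible control---is precisely the standing assumption that $x_T$ be reachable from $0$ under \eqref{eq:PHsys}. For hypothesis (a)---steering $x^0$ to the origin---I would appeal to null-controllability of the input-constrained system: since $(A,B)$ with $A=(J-R)Q$ is controllable by assumption, every eigenvalue of $A$ has non-positive real part by Lemma~\ref{l:dec}(i), and $0 \in \inte(\bbU)$, the classical constrained-controllability result \cite[Theorem 5, p.\ 45]{Macki2012}---already used in the proof of Lemma~\ref{l:reachable}(i)---produces $T_1 \geq 0$ and $u_1 \in L^1(0,T_1;\bbU)$ driving $x^0$ into $0$. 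With (a) and (b) established, Lemma~\ref{l:turnpike} delivers the subspace turnpike estimate \eqref{eq:turnpike_V}.

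The dissipativity half is essentially bookkeeping on top of the energy balance and Lemma~\ref{l:dist}, so the genuinely delicate point is hypothesis (a). The subtlety is that reaching the origin with \emph{compactly constrained} inputs is strictly stronger than unconstrained controllability: it is the conjunction of controllability with the spectral bound of Lemma~\ref{l:dec}(i)---the absence of eigenvalues in the open right half-plane---that rules out unstable escaping modes and makes the origin attainable with bounded controls. I expect this to be the step demanding the most care, chiefly in invoking the correct constrained null-controllability theorem rather than a mere Kalman rank condition.
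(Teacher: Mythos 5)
Your proposal is correct and follows essentially the same route as the paper: the dissipation inequality with storage function $S=H$ comes from the energy balance \eqref{eq:dissipativity} combined with Lemma~\ref{l:dist} (giving $\alpha(r)=c_1^2r^2$), and the turnpike claim follows from Lemma~\ref{l:turnpike} once hypothesis (a) is verified via the constrained null-controllability result \cite[Theorem 5, p.\ 45]{Macki2012} and hypothesis (b) via the reachability assumption on $x_T$. You merely spell out details the paper leaves implicit.
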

\begin{proof}
The first claim follows immediately from \eqref{eq:dissipativity} and Lemma \ref{l:dist}. And since condition (a) in Lemma \ref{l:turnpike} is satisfied in our case (see \cite[Theorem 5, p.\ 45]{Macki2012}), the second claim is a direct consequence of Lemma \ref{l:turnpike}.
\end{proof}

\begin{remark}[Available storage]\label{rem:availStor}
In the foundational work of Jan Willems (cf.\ \cite{Willems1972a}) the available storage for a dissipative system with supply rate $w:\mathbb{R}^n\times\mathbb{R}^m\to\mathbb{R}$ is defined by
		\begin{align*}
\tilde{S}(x^0)\doteq -\sup_{T\geq 0,u\in L^1(0,T;\mathbb{U})}  \int_0^Tw((x(t,u,x^0),u(t)))\,\mathrm{d}t.
		\end{align*}
It is well-known that boundedness of the available storage is a necessary and sufficient condition for dissipativity. 
Considering the particular supply rate $w(x,u) = u^\top B^\top Qx-\|R^{\frac12} Qx\|^2$
we get for solutions of \eqref{eq:PHsys} that
		\begin{align*}
\tilde{S}(x^0) = \sup_{T\geq 0,u\in L^1(0,T;\mathbb{U})} &(H(x(0))- H(x(T,u,x^0)). 
		\end{align*}
For a in-depth treatment of passivity inequalities for pH~systems the interested reader is also referred to \cite{van2008balancing}.
\end{remark}
	
The next result summarizes the main insights.

\begin{theorem}[Dissipativity with respect to subspaces]
	\label{thm:dissipativity}
Assume that $J-R$ is invertible. Then the following hold:
\begin{enumerate}
\item[(i)] Optimal steady states and corresponding Lagrange multipliers satisfy $Q\hat{x},\hat{\lambda}\in \ker{R}$ and $Q\hat x + \hat\la = 0$.
\item[(ii)]  All $\hat{u}\in\mathbb U$ satisfying $(J-R)^{-1}B\hat{u} \in \ker{R}$ are optimal controls for \eqref{OCP:steady}.
\item[(iii)]  OCP \eqref{OCP:terminal} is strictly dissipative with respect to $\ker{R^{\frac12}Q}$ with storage function $H(x)$.
\item[(iv)]  If $R$ is invertible, then the unique optimal steady state is $\hat{x}=\hat{u}=0$ and OCP \eqref{OCP:terminal} is strictly dissipative with storage function $H(x)$.
\item[(v)] The available storage is given by $\tilde{S}(x) = \tfrac{1}{2} x^\top Q x$.
\end{enumerate}
\end{theorem}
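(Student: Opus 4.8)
The plan is to treat the five items essentially in the stated order, since (i) and (iv) reduce to earlier results, while (ii), (iii) and (v) are short consequences of the energy balance~\eqref{eq:dissipativity} together with Lemma~\ref{l:dist} and Remark~\ref{rem:availStor}. I would first dispatch (i): it is exactly the ``in particular''-conclusion of Theorem~\ref{t:oss}. Invertibility of $J-R$ gives $Q\hat x+\hat\la=0$, while the identities $RQ\hat x=0$ and $R\hat\la=0$ from Theorem~\ref{t:oss} say precisely that $Q\hat x,\hat\la\in\ker R$.

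Next I would prove (ii) by evaluating the steady-state objective along the constraint of~\eqref{OCP:steady}. Using $B\hat u=-(J-R)Q\hat x$, symmetry of $Q$, and $(J-R)^\top=-(J+R)$, one gets $\hat u^\top\hat y=\hat u^\top B^\top Q\hat x=\hat x^\top Q(J+R)Q\hat x$; the skew term $\hat x^\top QJQ\hat x$ vanishes, leaving $\hat u^\top\hat y=\|R^{1/2}Q\hat x\|^2\ge 0$. Hence the objective is nonnegative and equals zero exactly when $RQ\hat x=0$. Since $J-R$ (and $Q$) are invertible we have $Q\hat x=-(J-R)^{-1}B\hat u$, so $RQ\hat x=0$ is equivalent to $(J-R)^{-1}B\hat u\in\ker R$. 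As $\hat u=0\in\inte(\bbU)$ is admissible with objective value $0$, the optimal value is $0$, and the optimizers are precisely those $\hat u$ with $(J-R)^{-1}B\hat u\in\ker R$, which is the claim.

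For (iii) I would exhibit the storage function explicitly as the strengthened form of Theorem~\ref{t:ph_turnpike}: take $S=H$ and, with the constant $c_1$ from Lemma~\ref{l:dist}, set $\alpha(r)=c_1^2 r^2\in\mathcal{K}$. Substituting~\eqref{eq:dissipativity} into the dissipation inequality~\eqref{eq:sDI}, the terms $H(x_T)-H(x^0)$ cancel and what remains is $0\le\int_0^T\big(\|R^{1/2}Qx\|^2-\alpha(\operatorname{dist}(x,\ker(R^{1/2}Q)))\big)\,dt$, which holds pointwise by Lemma~\ref{l:dist} (indeed for every feasible, not only every optimal, trajectory). Part (iv) is then the degenerate case: $R$ invertible forces $\ker R=\{0\}$, hence $\ker(R^{1/2}Q)=\{0\}$ since $Q$ is invertible; by (i) this yields $\hat x=0$, $\hat\la=0$, and the steady-state constraint with full-rank $B$ gives $\hat u=0$, while strict dissipativity with storage $H$ is the special case $\mathcal{V}=\{0\}$ of (iii).

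Finally, for (v) I would start from Remark~\ref{rem:availStor}. Because $\dot H=w$ holds with equality along~\eqref{eq:PHsys} for the supply rate $w=u^\top B^\top Qx-\|R^{1/2}Qx\|^2$, integration gives $\int_0^T w\,dt=H(x(T))-H(x^0)$ and therefore $\tilde S(x^0)=\sup_{T,u}\big(H(x^0)-H(x(T))\big)$. Since $H\ge 0$, the upper bound $\tilde S(x^0)\le H(x^0)$ is immediate. The reverse inequality is the one genuinely non-algebraic step and the main obstacle: it requires driving $H(x(T))$ down to $0$, i.e.\ steering the state to the origin with admissible inputs. This null-controllability under constraints is available because $\Re\sigma(A)\le 0$ by Lemma~\ref{l:dec}(i) and $0\in\inte(\bbU)$, via \cite[Theorem 5, p.\ 45]{Macki2012} (the same steering-to-zero already used in the proof of Theorem~\ref{t:ph_turnpike}), so $\inf_{T,u}H(x(T))=0$ and thus $\tilde S(x)=H(x)=\tfrac12 x^\top Qx$. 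I expect the only delicate points to be this reachability argument (which needs controllability rather than the purely energy-balance manipulations used elsewhere) and keeping the sign convention of the available storage in Remark~\ref{rem:availStor} consistent so that it indeed reduces to $\sup_{T,u}(H(x^0)-H(x(T)))$.
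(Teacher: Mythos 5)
Your proof is correct and follows essentially the same route as the paper: (i) is read off from Theorem~\ref{t:oss}, (ii) from the steady-state energy identity $\hat u^\top\hat y=\|R^{1/2}Q\hat x\|^2\ge 0$ together with feasibility of $\hat u=0$, (iii) from \eqref{eq:dissipativity} combined with Lemma~\ref{l:dist}, (iv) as the degenerate case $\ker R=\{0\}$, and (v) from Remark~\ref{rem:availStor} and $H\ge 0$. The one place you go beyond the paper is part (v), where you rightly flag that $\tilde S(x)=H(x)$ requires driving $H(x(T))$ to zero with admissible inputs; be aware that the constrained steering-to-zero result you invoke (\cite[Theorem 5, p.~45]{Macki2012}) needs controllability of $((J-R)Q,B)$, which is not among the stated hypotheses of Theorem~\ref{thm:dissipativity} --- a latent assumption that the paper's own one-line proof of (v) also leaves implicit.
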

\begin{proof}
Part (i) follows immediately from Theorem \ref{t:oss}. For (ii) we compute with Theorem~\ref{t:oss} for all optimal steady states that
\begin{align}
\label{eq:steady_cost_identity}
\hat{u}^\top \hat{y} = \|R^{\frac12}Q\hat{x}\|_2^2 = 0
\end{align}
as they are particular solutions of the pH~system with constant energy, i.e., choosing the (constant) control $\hat{u}$ and the initial state $\hat{x}$. For (iii) we use \eqref{eq:dissipativity} and obtain
\begin{align*}
H(x(T))-H(x(0)) = \int_0^T   u(t)^\top y(t) - \|R^{\frac12}Qx(t)\|^2\,\mathrm{d}t.
\end{align*}
To show Part (iv), we insert \eqref{eq:steady_cost_identity} into \eqref{OCP:steady}. As $R$ is positive definite, $R^{\frac12}Q$ is and we can estimate
\begin{align}
\label{eq:R_coercive}
\|R^{\frac12}Qv\| \geq \gamma \|v\|,
\end{align}
for $\gamma > 0$ and all $v\in \mathbb{R}^n$. Hence, by invertibility of  $(J-R)Q$, $\hat{u}=\hat{x}=0$ is the unique optimal solution with objective value zero. Moreover, \eqref{eq:R_coercive} yields strict dissipativity. Part (v) follows directly from $Q>0$ and Remark~\ref{rem:availStor}.
\end{proof}

\begin{remark}[Terminal cost instead of terminal value]
We briefly discuss the previous results in the case of a terminal cost, i.e., when replacing the terminal condition $x(T)=x_T$ in \eqref{OCP:terminal} by a terminal cost, i.e., minimizing $C_\varphi(u)\doteq C(u) + \varphi(x(T))$, where $\varphi:\mathbb{R}^n\to \mathbb{R}$ is a continuously differentiable Mayer term. The dissipativity notion introduced in Definition~\ref{def:DI_V} can be completely analogously defined for this case. In particular, Assumption (b) of Lemma~\ref{l:turnpike} is not necessary to conclude a turnpike result, as all terminal values $x(T)$ are feasible. Hence also Theorem~\ref{t:ph_turnpike} holds for the case of terminal cost when dropping the assumption that $x_T$ is reachable from zero.
\end{remark}

\begin{remark}[Connection with optimal steady states]
Theorem~\ref{thm:dissipativity} (i) states that optimal steady states lie in the conservative subspace, i.e., $\hat{x}\in \ker{R^{\frac12} Q}$. By the dissipativity (iii), we can conclude a turnpike property in the sense of \eqref{eq:turnpike_V} towards this subspace. This means, that solutions of the dynamic problem are close to the solutions of the steady state problem up to directions that lie in the conservative subspace. If $R$ is invertible, then this states a classical turnpike property towards the unique optimal steady state $\hat{x}=0$ by (iv).
\end{remark}
	
\begin{remark}[Regularization of the OCP]
	If we augment the cost functional with an additional control cost of the form $\int_0^T  \varepsilon \|u(t)\|^2\,\mathrm{d}t$, $\varepsilon >0$, by \eqref{eq:dissipativity} we obtain
	\begin{align*}
	&\int_0^T u(t)^\top y(t) + \varepsilon\|u(t)\|^2 \,\mathrm{d}t \\&= H(x(T))-H(x(0)) + \int_0^T  \|R^{\frac12}Qx(t)\|^2 + \varepsilon\|u(t)\|^2\,\mathrm{d}t.
	\end{align*}
	Then, assuming we have no specified terminal state, the optimization reduces to
	\begin{align*}
	\min_{u\in L^1(0,T;\mathbb{U})} &\int_0^T  \|R^{\frac12}Qx(t)\|^2 + \varepsilon\|u(t)\|^2\,\mathrm{d}t + H(x(T))\\
\text{s.t.}\quad	\dot{x}(t)&=(J-R)Qx + Bu\\
	x(0)&=x^0.
	\end{align*}
	In \cite[Proposition 1]{Pighin2020a} it was proven that if $((J-R)Q,B)$ is stabilizable one obtains the estimate
	\begin{align*}
	\|D(x(t)-\hat{x})\| + \|u(t)-\hat{u}\| \leq C(e^{-\mu t} + e^{-\mu(T-t)}),
	\end{align*}
	where $D\in \mathbb{R}^{n\times n}$ is a projection onto the detectable subspace, i.e., the observable subspace that corresponds to eigenvalues of $(J-R)Q$ with nonnegative real part. If $((J-R)Q,R^{\frac12}Q)$ is detectable, then $D=I$. Note, that this differs from our setting as we do not have a control penalization and control constraints, which rules out the Riccati theory used in \cite{Pighin2020a}. We briefly discuss a possible extension of Theorem~\ref{thm:dissipativity} to this control-regularized case. One immediately sees that the unique optimal steady state is given by $\hat{x}=\hat{u}=0$ with associated Lagrange multiplier $\hat{\lambda}=0$. Hence claim (i) of Theorem~\ref{thm:dissipativity} trivially holds. It is clear, that also the dissipation inequality \eqref{eq:sDI} still holds as we only add the positive term $\int_0^T\varepsilon \|u(t)\|^2\,dt$ on the right-hand side. Hence the claims (iii) and (iv) of Theorem~\ref{thm:dissipativity} also remain valid. The second claim (ii) does obviously not hold by uniqueness of the optimal control. 
\end{remark}

\section{NUMERICAL EXAMPLE}\label{sec:example}
\noindent We consider a mass-spring damper with external force as sketched in Figure~\ref{fig:ex1}.
\begin{figure}[!h]
	\centering
	\includegraphics[scale=0.7]{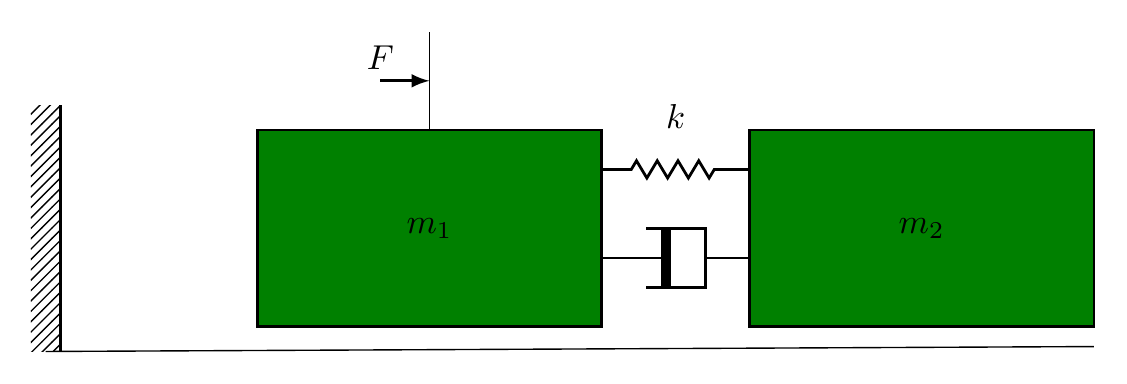}
	\caption{Mass-spring damper with external force.}\label{fig:ex1}
\end{figure}
The vector of energy variables is given by $x=\left(
	p_{1},
	p_{2},
	q
\right)$ with momenta $p_{1},p_2$ and displacement of the spring $q$. The total mechanical energy is defined as $H\left(x\right)=\frac{p_{1}^{2}}{2\,m_{1}}+\frac{p_{2}^{2}}{2\,m_{1}}+\frac{1}{2}k\,q^{2}$. This leads to $Q \doteq \operatorname{diag} ( 1/m_{1},1/m_{2},k)$, which we normalize to be the identity.

We consider a friction force (setting later the friction coefficient $\nu=1$)
$$
	F_{r}\doteq\nu\left(\frac{dq}{dt}\right)=\nu\left(\frac{p_{2}}{m_{2}}-\frac{p_{1}}{m_{1}}\right).
$$
Hence, the Poisson and the dissipation matrix 
are given by
\begin{align}\label{eq:exR1}
	J \doteq \begin{pmatrix} 
	\phantom{-}0 & 0 & \phantom{-}1\\
	\phantom{-}0 & 0 & -1\\
	-1 & 1 & \phantom{-}0
	\end{pmatrix}, 
	\quad R \doteq \begin{pmatrix} 
		\phantom{-}1&-1&0\\
		-1&\phantom{-}1&0\\
		\phantom{-}0&\phantom{-}0&0
	\end{pmatrix}. 
\end{align}
The input matrix is $B=\begin{pmatrix}
1&0&0
\end{pmatrix}^\top$.

\subsection{One dimensional subspace turnpike}

\noindent Let the initial and terminal state by given by $x^0=(1,1,1)^\top$ and $x_T = (-1.2,-0.7,-1)^\top$, respectively. We solve the corresponding OCP \eqref{OCP:terminal} with the 
\textsc{MATLAB} toolbox \emph{fmincon}. 

Figure~\ref{fig:01} depicts pairs of optimal controls and trajectories 
for three different time horizons $T \in \{10,30,50\}$. We observe that no variable exhibits a classical turnpike in the sense that state or control approach a steady state. 

\begin{figure}[!htb]
	\begin{center}
		\includegraphics[scale=.6]{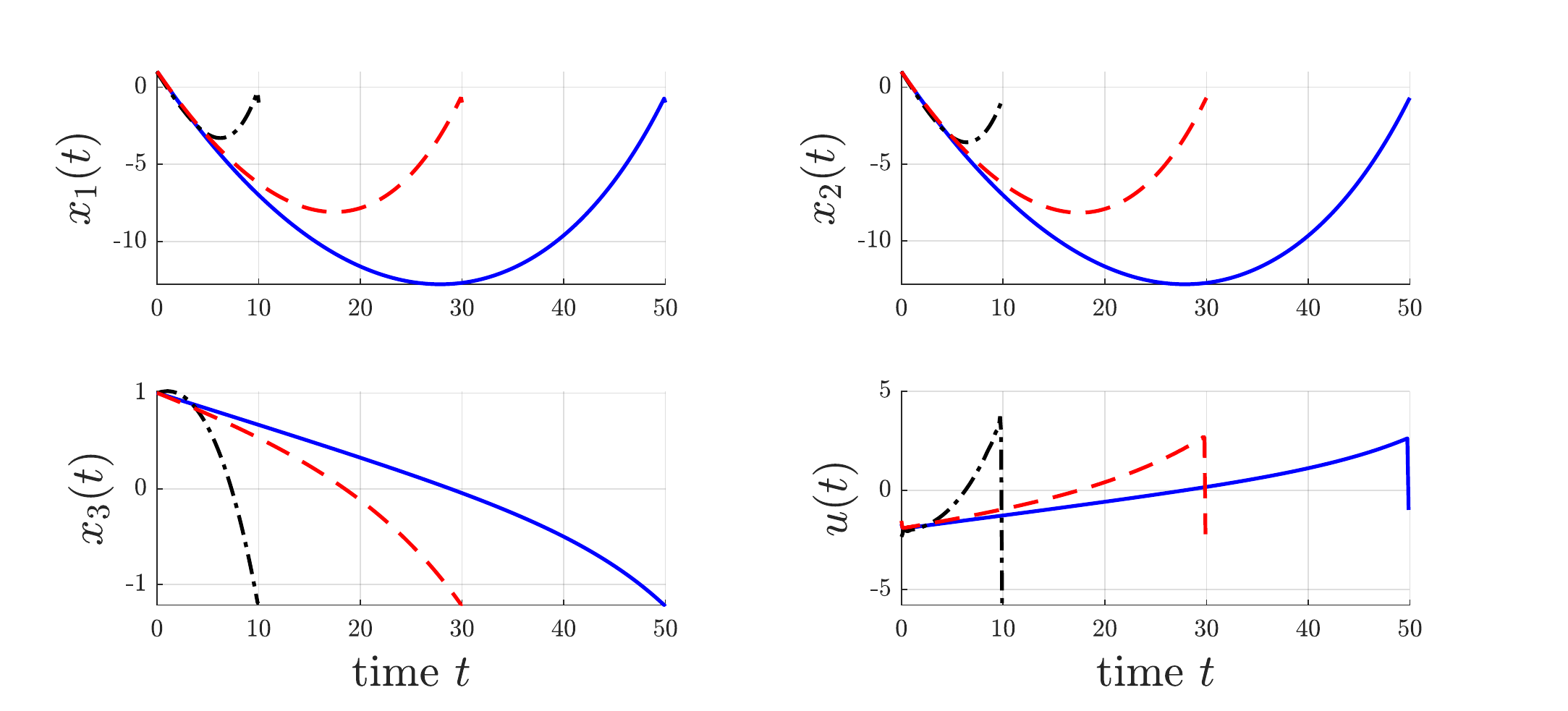}
	\end{center}
	\caption{Optimal state and control of OCP~\eqref{OCP:terminal} for the mass-spring-damper system~\eqref{eq:exR1} for time horizons $T=10$ ($- \cdot$), $T=30$ (\textcolor{red}{$--$}), $T=50$  (\textcolor{blue}{---}). \label{fig:01}}
\end{figure}

\begin{figure}[!htb]
	\begin{center}
				\includegraphics[scale=.5]{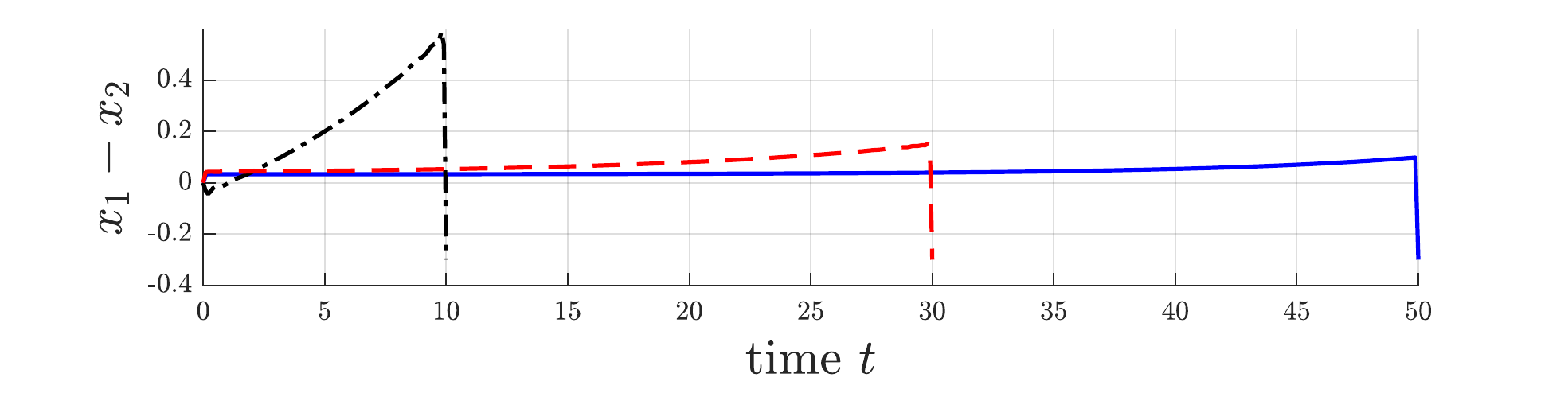}\\
		\includegraphics[scale=.5]{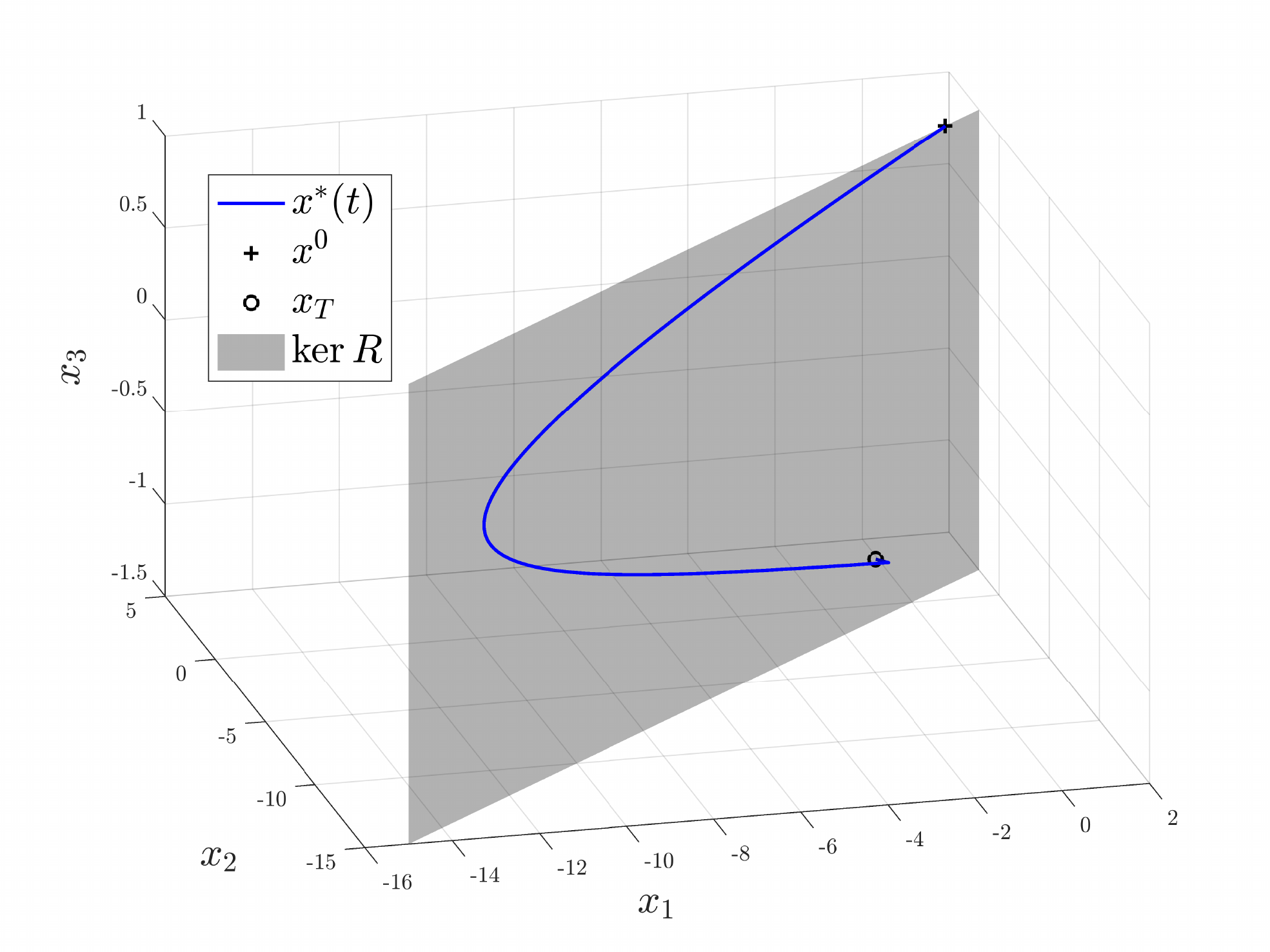}
	\end{center}
	\caption{Optimal state trajectories of OCP~\eqref{OCP:terminal} for the mass-spring-damper system~\eqref{eq:exR1}: illustration of the distance $|x_1-x_2|$ to the non-dissipative subspace~\eqref{e:kern1} for horizons $T=10$ ($- \cdot$), $T=30$ (\textcolor{red}{$--$}), $T=50$  (\textcolor{blue}{---}).
	\label{fig:02}}
\end{figure}

However, if we additionally indicate the non-dissipative subspace \begin{align}
\label{e:kern1}
\ker R^{\frac12}Q= \ker R=\{x\in \mathbb{R}^3\,|\,x_1-x_2=0\},
\end{align}it can be clearly seen that the state approaches this subspace as proven in Theorem \ref{t:ph_turnpike}, see Figure~\ref{fig:02}.


\subsection{Two dimensional subspace turnpike}

\noindent Next, we slightly modify the dissipation matrix to illustrate the case where the subspace $N_1$ of Lemma~\ref{l:dec} is two-dimensional. To this end, we set
\begin{align}
	J\doteq \begin{pmatrix} 
		\phantom{-}0 & 0 & \phantom{-}1\\
		\phantom{-}0 & 0 & -1\\
		-1 & 1 & \phantom{-}0
	\end{pmatrix}, 
	\quad R\doteq \begin{pmatrix} 
		1&1&0\\1&1&0\\0&0&0
	\end{pmatrix}. 
	\label{eq:exR2}
\end{align}
In Figure~\ref{fig:1} we depict the optimal state and control 
for the three different time horizons $T \in \{10,15,20\}$. 
\begin{figure}[H]
	\begin{center}
	\includegraphics[scale=.6]{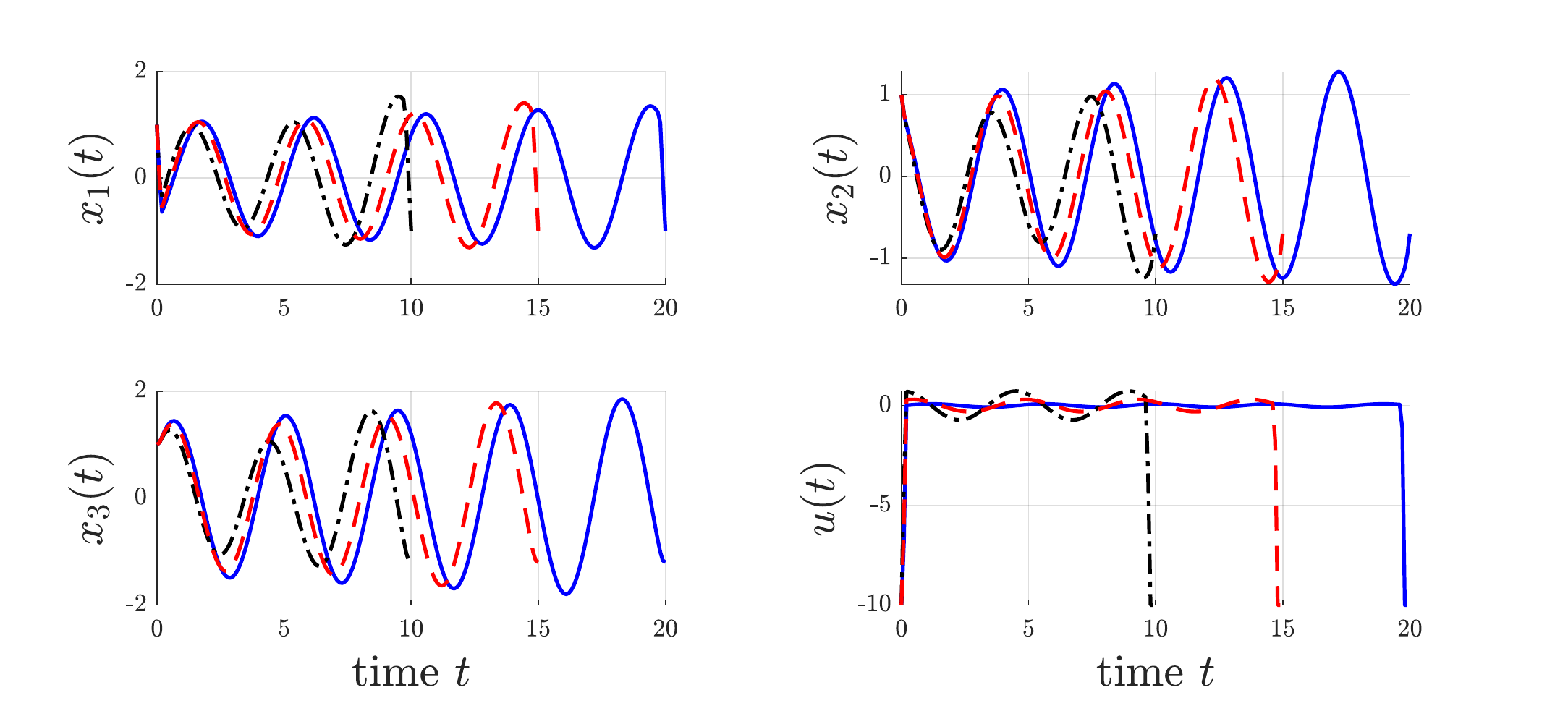}
	\end{center}
\caption{Optimal solution of OCP~\eqref{OCP:terminal} for the modified mass-spring-damper system~\eqref{eq:exR2} for time horizons $T=10$ ($- \cdot$), $T=15$ (\textcolor{red}{$--$}), $T=20$  (\textcolor{blue}{---}).\label{fig:1}}
\end{figure}

We can clearly see that all states are fully dynamic over the entire horizon, i.e., they do not approach a classical steady state turnpike. The control, however, is close to zero for the majority of the time.
The subspace turnpike phenomenon proved in Theorem~\ref{t:ph_turnpike} can be observed in Figures~\ref{fig:2}: the optimal state approaches the subspace 
\begin{align}
\label{e:kern2}
\ker R^{\frac12}Q = \ker R = \{x\in \mathbb{R}^3\,|\,x_1+x_2=0\}
\end{align} and the behavior is dominated by the skew symmetric matrix $J_1$ corresponding to the decomposition in Lemma~\ref{l:dec}.
\begin{figure}[htb]
	\begin{center}
		\includegraphics[scale=.6]{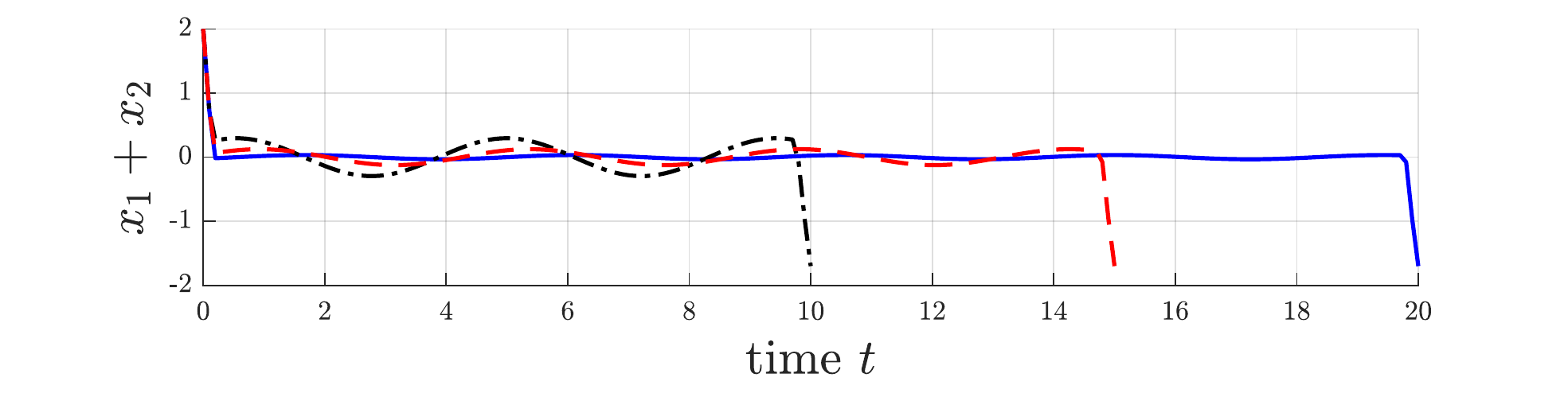} \\
		\includegraphics[scale=.6]{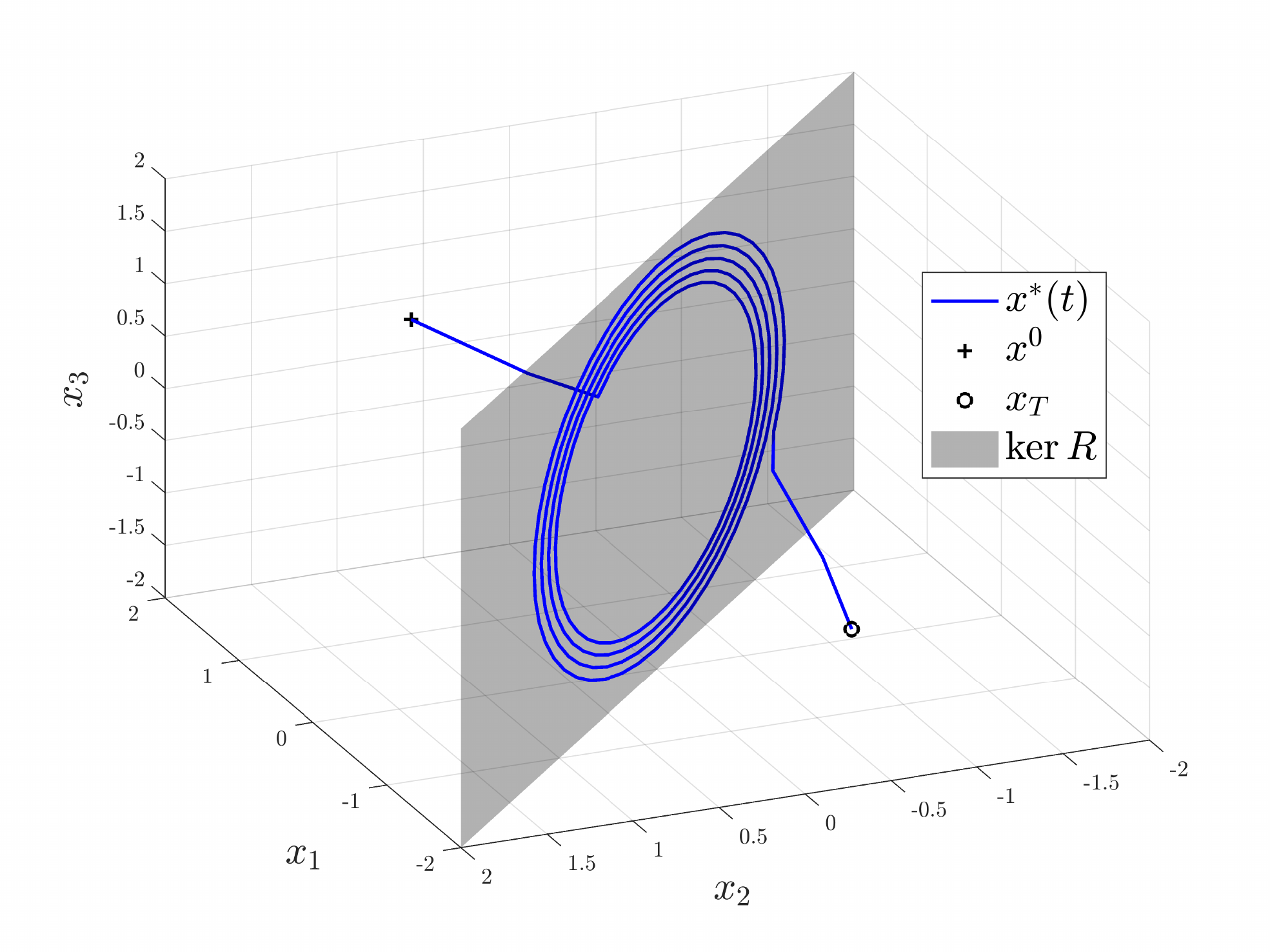}
	\end{center}
	\caption{Optimal state trajectories of OCP~\eqref{OCP:terminal} for the modified mass-spring-damper system~\eqref{eq:exR2}: illustration of the distance $|x_1+x_2|$ to the non-dissipative subspace \eqref{e:kern2} for horizons $T=10$ ($- \cdot$), $T=15$ (\textcolor{red}{$--$}), $T=20$  (\textcolor{blue}{---}).}\label{fig:2}
\end{figure}

\FloatBarrier

\section{Conclusions}
\noindent This paper has studied optimal control problems for linear port-Hamiltonian systems. Specifically, we consider the problem of state transition 
while minimizing the intrinsic pH~objective, i.e. the supplied energy. We have shown that under mild assumptions the considered OCPs are strictly dissipative w.r.t.\ 
the kernel of the energy-dissipation matrix~$RQ$, i.e., the structure matrix of the generalized gradient part of the system. 
This induces the turnpike phenomenon w.r.t.\ a subsapce, i.e., w.r.t. the kernel of the gradient structure matrix. 
Finally, we have drawn upon a numerical example to illustrate the interplay between the energy-dissipation matrix and the structure of the turnpike in the optimal solutions.

\bibliographystyle{abbrv}
\bibliography{references}   
\end{document}